\numberwithin{equation}{section}
\DeclareMathOperator\dvg{div}
\newtheorem{thm}{Theorem}
\newtheorem{prop}[thm]{Proposition}
\newtheorem{rmq}[thm]{Remark}
\newtheorem{lem}[thm]{Lemma}
\newtheorem{example}[thm]{Example}
\numberwithin{thm}{section}
\newcommand{\ro}{\rho}
\newcommand{\n}{\nabla}
\newcommand{\intT}{\int_{\mathbb{T}^2}}
\newcommand{\T}{\mathbb{T}}
\newcommand{\R}{\mathbb{R}}
\newcommand{\esp}[1]{ \quad \text{#1} \quad}
\newcommand{\normes}[2]{\left\Vert #1\right\Vert_{L^{#2}}}
\newcommand{\normede}[1]{\left\Vert #1\right\Vert_{2}}
\newcommand{\normep}[1]{\left\Vert #1\right\Vert_{p}}
\newcommand{\normer}[1]{\left\Vert #1\right\Vert_{r}}
\newcommand{\normeinf}[1]{\left\Vert #1\right\Vert_{L^\infty}}
\newcommand{\dd}{\mathrm{d}}
\title{Inhomogenous Navier--Stokes equations with unbounded density}
\author{Jean-Paul Adogbo$^{\S}$, Piotr B. Mucha$^*$, Maja Szlenk$^{*,\dagger}$}
\date{\today}
\begin{document}

\maketitle

{
\footnotesize

\centerline{$^\S\;$Ceremade, UMR CNRS 7534, Université Paris Dauphine-PSL }
\centerline{Place du Mar\' echal De Lattre De Tassigny 75775 Paris cedex 16 (France)}
\bigbreak
\centerline{$^*\;$Institute of Applied Mathematics and Mechanics, University of Warsaw, }
\centerline{ul. Banacha 2, Warsaw 02-097, Poland}

\bigbreak
\centerline{$^\dagger\;$LAMA, UMR CNRS 5127, Universit\'e Savoie Mont Blanc, }
\centerline{B\^{a}t. Le Chablais, Campus Scientifique 73376 Le Bourget du Lac, France}
}

\bigbreak

\begin{abstract}

In the current state of the art regarding the Navier--Stokes equations, the existence of unique solutions for incompressible flows in two spatial dimensions is already well-established.
Recently, these results have been extended to models with variable density, maintaining positive outcomes for merely bounded densities, even in cases with large vacuum regions. However, the study of incompressible Navier-Stokes equations with unbounded densities remains incomplete. Addressing this gap is the focus of the present paper.

Our main result demonstrates the global existence of a unique solution for flows initiated by unbounded density, whose regularity/integrability is characterized within a specific subset of the Yudovich class of unbounded functions. The core of our proof lies in the application of Desjardins' inequality, combined with a blow-up criterion for ordinary differential equations. Furthermore, we derive time-weighted estimates that guarantee the existence of a $C^1$ velocity field and ensure the equivalence of Eulerian and Lagrangian formulations of the equations. Finally, by leveraging results from \cite{DanMu}, we conclude the uniqueness of the solution.

\end{abstract}

\textbf{Keywords:} inhomogeneous fluid, incompressible flow, Navier--Stokes system, global existence, uniqueness.

\section{Introduction}
A vast amount of research has focused on the mathematical analysis of the \textit{incompressible inhomogeneous Navier--Stokes} equations with bounded initial density. This system models a fluid formed by mixing two incompressible, miscible fluids with different densities. For the derivation of the model, we refer to \cite{lions1996vol1}. Common example involves for example river dynamics, in particular junction of water channels \cite{ammar20062d}. Another inhomogeneous fluid is blood, which is a suspension of blood cells in plasma \cite{Fusi_blood,archer2009dynamical,liu2004coupling}. 
Recall that these equations read
\begin{equation}\label{INS}
   \tag{\textbf{INS}}
   \begin{cases}
       \ro_t+v\cdot\n \ro=0 \quad \text{in } \mathbb{R}_+\times \Omega ,\\
       \ro v_t+\ro v\cdot\n  v-\mu \Delta v+\n P=0 \text{  in } \mathbb{R}_+\times \Omega, \\
       \dvg v =0 \text{  in  } \mathbb{R}_+\times \Omega.
   \end{cases}
\end{equation}
The unknowns are the density $\ro=\ro(t,x)$, the velocity $v=v(t,x)$ and the pressure $P=P(t,x)$. 

The recent analysis of the above system motivates us to give the answer to the following  natural question:

\begin{center}


{\it How much can we extend the class of admissible densities, \\[7pt]
 to derive the same result as assuming that the density is bounded?}

\end{center}

The most known and natural choice for extending the $L^\infty$ framework is the $BMO$ space. Here we mention the branch of results related to the so-called logarithmic Sobolev inequality \cite{Dan1996,KozTan} and marginal spaces \cite{Rosa,Mu-tra,MuRu}. In this paper, our aim is to extend the result from \cite{DanMu} to a larger class of densities, in particular containing unbounded functions. Following the strategy from the mentioned result, the $BMO$ space seems to be not a suitable class for the initial density. However, we are able to obtain the result in a narrower regularity class, but still larger than $L^\infty$. 
We assume that the fluid domain $\Omega$ is either the torus  $\T^2$ or a $C^2$ simply connected bounded domain of $\R^2$. The system $(\ref{INS})$ is supplemented with initial data at time $t=0$:
\begin{equation}
    \label{ini:data}
    \ro|_{t=0}=\ro_0 \esp{and} v|_{t=0}=v_0.
\end{equation}
In the case where $\Omega$ is not a torus, we additionally supplement the system with the boundary condition $v=0$ on $\partial\Omega$.
It is well-known that sufficiently smooth solutions to (\ref{INS}) possess several important properties, such as
\begin{description}
    
 \item[$\bullet$ Conservation of the momentum :] (for the torus case only)
\begin{equation}
    \label{conser:momen}
     \intT (\ro v)(t,x)dx=\intT (\ro_0 v_0)(x)dx,
\end{equation}
  \item[$\bullet$ Conservation of total mass:] 
\begin{equation}
    \label{conser:mass}
  \int_\Omega \ro(t,x)dx=\int_\Omega \ro_0(x)dx,
\end{equation}
\item[$\bullet$ Energy balance:]   
\begin{equation}
    \label{energy:balance}
    \frac{1}{2}\frac{d}{dt}\int_{\Omega} \ro |v|^2+\mu\int_{\Omega}|\n v|^2 =0,
\end{equation}
   \item[$\bullet$ Conservation of Lebesgue norm:] for all $p\in [1,\infty)$, 
   \begin{equation}
    \label{conser:BMO:Lp}
    \int_\Omega\ro^p(t,x)\;dx=\int_\Omega \ro_0^p(x)\;dx.
\end{equation}
\end{description}
The equations (\ref{conser:momen}) and (\ref{conser:mass}) follow straightforward  from integrating the equation over $\Omega$, whereas (\ref{energy:balance}) is derived by testing the momentum equation by $v$.
Furthermore, calculating the equation for $\ro^p$ we get
\[ \partial_t\ro^p + \dvg(\ro^pv) = 0, \]
 which leads to identity \eqref{conser:BMO:Lp}.

When the density $\ro$ of the solution is constant (corresponding to a monofluid), (\textbf{INS}) simplifies to the so-called homogeneous Navier-Stokes system, which is written as follows:
\begin{equation}\label{NS}
   \tag{\textbf{NS}}
   \begin{cases}
        v_t+ v\cdot\n  v-\mu \Delta v+\n P=0 \text{  in } \mathbb{R}_+\times \Omega, \\
       \dvg v =0 \text{  in  } \mathbb{R}_+\times \Omega.
   \end{cases}
\end{equation}
The system (\ref{NS}) shares with (\ref{INS}) the properties of the energy balance given by \eqref{energy:balance} and the conservation of momentum described by \eqref{conser:momen}.



The global existence theory of (\ref{NS}) has been known since the work of Leray in 1934 \cite{leray1934mouvement}. In the case $\Omega = \mathbb{R}^3$, Leray proved that any divergence-free velocity $v_0$ belonging to $L^2$ generates a global distributional solution of (\ref{NS}). Moreover, it was shown that the constructed solution $v$ satisfies $v \in L^\infty([0,\infty); L^2) \cap L^2([0,\infty); \dot{H}^1)$.

Numerous papers have since been devoted to this topic in both bounded and unbounded domains for dimensions $d = 2$ and $d = 3$. In 1959, O.A. Ladyzhenskaya \cite{Ladyzhenskaya1959}, along with J.-L. Lions and G. Prodi \cite{lions1959theoreme}, established the existence and uniqueness of solutions in two dimensions. While the two-dimensional case is now much better understood, the question of uniqueness of finite energy solutions in the three-dimensional case ($d = 3$) remains unresolved to this day, known as the VI-th Millennium Problem.

Concerning the inhomogeneous Navier-Stokes equations, the current state of the art indicates that the weak (distributional) solution theory is quite similar to that of the homogeneous case. In fact, when the density is bounded away from zero, the energy balance \eqref{energy:balance} suggests seeking weak solutions within a framework similar to that of the homogeneous Navier-Stokes equations, and the strong solution theory aligns with the homogeneous case as well. In this direction, A. V. Kazhikhov \cite{kazhikhov1974solvability} obtained, for the first time in 1974, the existence of global-in-time weak solutions, with the energy balance replaced by an inequality. Later, in 1990, Simon \cite{simon1990nonhomogeneous} improved this result by removing the lower bound on the density, assuming only that $\rho$ is nonnegative. Subsequently, P.-L. Lions \cite{lions1996vol1} demonstrated that the density is a renormalized solution of the mass equation. This insight allowed him to also consider cases where the viscosity $\mu$ depends on $\rho$ (see also \cite{desjardins1997global}).

The aforementioned existence results did not address uniqueness, which was first established by Ladyzhenskaya and Solonnikov in 1978 \cite{ladyzhenskaya1978unique}. They considered the case where $\Omega$ is a smooth bounded domain of $\mathbb{R}^2$ or $\mathbb{R}^3$, with smooth enough data, and the density is bounded from above and away from zero. They proved the existence of a unique local-in-time smooth solution, which is global in the two-dimensional case, or in higher dimensions if the initial velocity is sufficiently small.

The uniqueness of solutions $(\rho, v)$ for the inhomogeneous Navier-Stokes system (\ref{INS}), starting from initial data in the energy space (i.e., with finite initial energy), remains a significant challenge. As Danchin notes in \cite{danchin2024global}, unlike the homogeneous Navier-Stokes system (\ref{NS}), where uniqueness is typically achieved through energy methods combined with Gronwall’s inequality, it is not clear how to prove the uniqueness of solutions to (\ref{INS}) without the crucial estimate
\begin{align}
    \label{n2:v:inLin:L2}
    \nabla v \in L^1_{loc}(0,T;L^\infty).
\end{align}
Thus, for $(\rho_1, v_1)$ and $(\rho_2, v_2)$ being two solutions to (\ref{INS}), the difference in the densities $\delta \rho = \rho_1 - \rho_2$ satisfies a transport equation governed by a divergence-free vector field. Additionally, assuming the densities are merely bounded, the source term of this transport equation has negative regularity with respect to the spatial variable (see also \cite{danchinMucha2023compressible, prange2024free, hoff2006uniqueness}). Consequently, the estimate \eqref{n2:v:inLin:L2} is crucial to ensure the existence of a unique flow and to control the propagation of negative regularity in the transport equation.

An alternative approach is to reformulate the system (\ref{INS}) in Lagrangian coordinates, as in \cite{DanMu}. In this case, the density remains constant along characteristic curves, and the velocity still satisfies a parabolic equation. However, establishing the equivalence between the Eulerian and Lagrangian formulations of (\ref{INS}) in this low-regularity setting still depends on the validity of \eqref{n2:v:inLin:L2}, a condition that cannot be guaranteed when $v_0$ is only in $L^2$. In fact, this inequality even fails for the heat flow.

  For the heat equation, inequality  \eqref{n2:v:inLin:L2}  holds if the initial velocity $v_0$, belongs to the appropriate functional space, such as:
\begin{description}
    \item[$\blacksquare\; L^2$ framework:] the Sobolev spaces $H^s$ with $ s>\frac{d}{2}-1$ or the homogeneous Besov space  $ \Dot{B}^{\frac{d}{2}-1}_{2,1}$. In the latter case,  $\n e^{t\Delta} v_0 \in L^1_{loc}(0,T;\Dot{B}^{\frac{d}{2}}_{2,1})$ which is
embedded in the set $L^1_{loc}(0,T; C_0(\mathbb{R}^d) )$ of continuous functions on $\mathbb{R}^d$ vanishing at infinity  (\cite{HajDanChe11}).
\item[$ \blacksquare\; L^p$ framework:] the homogeneous Besov spaces $ \Dot{B}^{\frac{d}{p}-1}_{p,1}$, with $1\le p\le \infty$ (\cite{HajDanChe11}).
\end{description}

This observation has inspired numerous works on the well-posedness theory for the inhomogeneous Navier-Stokes system (\ref{INS}). In \cite{ZhangZhang13}, the authors proved the existence and uniqueness of solutions of  (\ref{INS}) when $v_0\in H^s(\mathbb{R}^2)$, with $s>0$, and $\ro_0$ is bounded from above and from below ( i.e. $0<c_0\le \ro_0\le c_1<\infty$).



In the three-dimensional case, the authors constructed a unique local-in-time solution for initial data, which becomes global if the initial data satisfies a smallness condition. This result was later improved in \cite{ZhangChen16}, \cite{zhang2020global}. Recently, R. Danchin and S. Wang in \cite{DanWang2023global} established the global-in-time well-posedness for initial velocity $v_0$
  in the critical Besov space  $\Dot{B}_{p,1}^{\frac{d}{p}-1}$ with $p\in (1,d)$, and for $\ro_0$  close to a positive constant.

In the presence of vacuum, local and global well-posedness in bounded domains was demonstrated in \cite{DanMu}. As a by-product, they provided a complete answer to the question posed by Lions in his book \cite{lions1996vol1}, 
concerning the evolution of a "patches of density" (i.e drop of incompressible viscous fluid in vacuum). More recently, Prange and Tan studied the existence and uniqueness of solutions on $\mathbb{R}^d$ (with
$d=2,3$) under certain special vacuum configurations in \cite{prange2024free}.

The aim of this paper is to go beyond the result in \cite{DanMu} by considering unbounded, nonnegative initial density, while maintaining the regularity of the initial divergence free velocity $v_0$.
We observe that the method used in \cite{DanMu} can also be applied when the density is just in any Lebesgue space $L^p(\Omega)$ (with $1\le p<\infty$) yielding local existence and uniqueness.  A remarkable feature of our result is that, even though
the density is rough and does not need to be bounded, one can exhibit a framework in  which we get global existence and uniqueness result. Moreover, we succeed in exhibiting gain of regularity for the velocity, which entails that \eqref{n2:v:inLin:L2} holds, thereby leading to uniqueness.

\section{The Results}

In this paper, we construct a global in time, unique solution to the system (\ref{INS}) with the initial velocity belonging to $H^1_0$. Our assumptions on initial density involve a new functional space, which contains $L^\infty$ but also admits unbounded functions, provided that the blow-up is sufficiently controlled.

We assume that the initial data $(\ro_0,v_0)$ satisfy
\begin{equation}\label{initia-data}
v_0\in H_0^1(\Omega), \quad \dvg v_0=0 \quad \text{and} \quad \sqrt{\ro_0}v_0\in L^2(\Omega). 
\end{equation}
Moreover,  $\ro_0$ is such that 
\begin{equation}\label{initial-rho}
\ro_0\geq 0, \quad \int_\Omega\ro_0\; \dd x = M>0.
\end{equation}
To determine the space of functions describing the class of the initial data for the density we introduce the class $\mathcal{L}$, defined as follows
\begin{equation}\label{def-Lclass}
\mathcal{L} := \left\{f\in \bigcap_{p\geq 1}L^p(\Omega): \lim_{p\to\infty}\frac{1}{\log p}\|f\|_p^2\log(1+\|f\|_p)=0\right\}. 
\end{equation}
Obviously $L^\infty\subset\mathcal{L}$. On the other hand, one can show that $\mathcal{L}\subset BMO$. In the Appendix \ref{app_L} we analyze the key examples to explain the main features of this space.

\smallskip 

Let us now state our main result.
\begin{thm}
    \label{thm:Lp:density}
    Let $\Omega$ be a $C^2$ bounded subset of $\mathbb{R}^2$ or the torus $ \mathbb{T}^2 $. Assume that the initial data $(\ro_0,v_0)$ satisfy (see (\ref{def-Lclass}))
    \begin{align}
        \label{data:ass}
        \begin{split}
            & v_0\in H_0^1(\Omega), \quad \dvg v_0=0,\\
            & \ro_0\in \mathcal{L}, \quad \ro_0\geq 0, \quad M:= \int_\Omega \varrho_0\;\dd x >0.
        \end{split}
    \end{align}
\noindent   
    Then there exists a unique solution  $(\ro, v, \n P)$ to system (\ref{INS}) with data $(\ro_0, v_0)$ fulfilling the conservation of momentum (\ref{conser:momen}) (in the case $\Omega=\mathbb{T}^2$), the conservation of mass \eqref{conser:mass}, the balance energy  \eqref{energy:balance} and the conservation of Lebesgue norm \eqref{conser:BMO:Lp}.
    Moreover, the following properties of regularity hold true:
    \begin{align*}
      \ro \in L^\infty(\mathbb{R}_+,L^{p}(\Omega)) \esp{for all } 1\le p< \infty , \esp{}  v\in L^\infty(\mathbb{R}_+, H^1_0(\Omega)), \\
       \sqrt{\ro}v_t \in L^2(\mathbb{R}_+, L^2(\Omega)),   \esp{} \n^2 v, \n P\in L^2(\mathbb{R}_+,L^r(\Omega)), \esp{for all} r<2.
    \end{align*}
    In addition, for all $T>0$
    \begin{align*}
      &  \sqrt{t\ro}v_t \in L^\infty(0,T;L^2(\Omega)),\quad \sqrt{t}\n v_t\in L^2(0,T;L^2(\Omega)),\\
       & \n^2( \sqrt{t} v),\; \n(\sqrt{t}P) \in L^q(0,T;L^{\lambda}(\Omega)) \esp{for} 2\le q\le \infty \esp{and} \lambda< \frac{2q}{q-2} .
    \end{align*}
    Finally, we have $\sqrt{\ro}v\in C(\mathbb{R}_+; L^2(\Omega))$, $\ro\in C(\mathbb{R}_+;L^{p_1}(\Omega))$  and $ v\in H^\gamma(0,T; L^{p_2}(\Omega) )$, for all ${p_1}<\infty$, $p_2<\infty$, $\gamma<\frac{1}{2} $ and $T>0$.
\end{thm}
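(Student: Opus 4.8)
The plan is to construct the solution by an approximation scheme, obtain uniform a priori estimates that survive passage to the limit, bootstrap regularity via time-weighted estimates, and finally invoke uniqueness from \cite{DanMu}.

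\textbf{Step 1: Approximate problems and basic energy estimates.} First I would regularize the data: replace $\ro_0$ by a sequence $\ro_0^n = \ro_0 \wedge n$ (or mollified, bounded, bounded below by $1/n$) and $v_0$ by a smooth divergence-free $v_0^n \to v_0$ in $H^1_0$. For such data the existence of a smooth (or at least strong) global solution $(\ro^n, v^n, \n P^n)$ is classical (Ladyzhenskaya--Solonnikov, or \cite{DanMu}). The energy balance \eqref{energy:balance} gives the uniform bound $\sqrt{\ro^n} v^n \in L^\infty(\R_+;L^2)$ and $\n v^n \in L^2(\R_+;L^2)$; the transport structure of the mass equation gives \eqref{conser:mass} and \eqref{conser:BMO:Lp}, so $\|\ro^n(t)\|_p = \|\ro_0^n\|_p \le \|\ro_0\|_p$ uniformly in $t$ and $n$, for every $p<\infty$.

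\textbf{Step 2: The key $H^1$ estimate via Desjardins' inequality.} The heart of the argument is to propagate $v \in L^\infty(\R_+;H^1_0)$ and obtain $\sqrt{\ro} v_t \in L^2(\R_+;L^2)$, $\n^2 v, \n P \in L^2(\R_+;L^r)$ for $r<2$. Testing the momentum equation by $v_t$ yields
\begin{equation*}
\int_\Omega \ro^n |v^n_t|^2 + \frac{\mu}{2}\frac{d}{dt}\int_\Omega |\n v^n|^2 = -\int_\Omega \ro^n (v^n\cdot\n) v^n \cdot v^n_t,
\end{equation*}
and the convective term is controlled by $\tfrac14 \int \ro^n |v^n_t|^2 + C\int \ro^n |v^n|^2 |\n v^n|^2$. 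The dangerous factor $\int \ro^n |v^n|^2|\n v^n|^2$ is estimated by Desjardins' inequality, which bounds $\|\sqrt{\ro}\,|v|\,|\n v|\|_{L^2}^2$ (or $\|\sqrt{\ro} v\|_{L^4}$-type quantities) by $\|\sqrt{\ro} v\|_{L^2}\,\|\n v\|_{L^2}^2\,\log(e + \text{higher norms})$, the logarithm involving a $BMO$ or $L^p$ norm of $\ro$ with $p\to\infty$; here is exactly where the class $\mathcal{L}$ enters: the defining condition $\lim_{p\to\infty}\frac{1}{\log p}\|\ro_0\|_p^2\log(1+\|\ro_0\|_p)=0$ is tailored to make the resulting Gronwall-type differential inequality for $y(t) = \mu\|\n v^n(t)\|_{L^2}^2$ have a global solution --- this is where I expect the main obstacle to be, namely tracking the precise form of the logarithmic loss and choosing the cutoff parameter $p=p(t)$ so that the blow-up criterion for the associated ODE is never triggered. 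Once $y\in L^\infty(\R_+)$ is secured (uniformly in $n$), elliptic regularity for the Stokes system $-\mu\Delta v^n + \n P^n = -\ro^n v^n_t - \ro^n v^n\cdot\n v^n \in L^r$ with $r<2$ gives the claimed $\n^2 v, \n P$ bounds.

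\textbf{Step 3: Time-weighted estimates and $C^1$ velocity.} To reach \eqref{n2:v:inLin:L2} I would differentiate the momentum equation in time, test by $v_t$, multiply by the weight $t$, and integrate; combined with the Step 2 bounds this produces $\sqrt{t\ro}\,v_t \in L^\infty(0,T;L^2)$ and $\sqrt t\,\n v_t \in L^2(0,T;L^2)$. Feeding this back into the Stokes estimates and using maximal regularity / interpolation (Sobolev embedding $W^{2,\lambda}\hookrightarrow W^{1,\infty}$ for $\lambda>2$, which is why one needs the full range $\lambda < \frac{2q}{q-2}$ with $q$ large) yields $\n^2(\sqrt t v), \n(\sqrt t P) \in L^q(0,T;L^\lambda)$, hence $\n v \in L^1_{loc}(0,\infty;L^\infty)$ and $v\in C((0,\infty);C^1)$. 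Along the way, the time-regularity $v\in H^\gamma(0,T;L^{p_2})$, $\gamma<\tfrac12$, follows from $\sqrt t v_t\in L^2$ together with $v_t = \ro^{-1}(\mu\Delta v - \n P - \ro v\cdot\n v)$ near $t=0$ via an interpolation in time, and continuity $\sqrt\ro v\in C(\R_+;L^2)$, $\ro\in C(\R_+;L^{p_1})$ comes from the energy equality (which upgrades weak to strong convergence, ruling out energy loss) and from $\ro$ being a renormalized/DiPerna--Lions solution of the transport equation with $\dvg v = 0$ and $\n v\in L^1_{loc}L^\infty$.

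\textbf{Step 4: Passage to the limit and uniqueness.} With all estimates uniform in $n$, standard compactness (Aubin--Lions, using $\sqrt\ro^n v^n_t\in L^2$ to get strong $L^2$ convergence of $v^n$, and boundedness of $\ro^n$ in $L^\infty_t L^p_x$ plus the transport equation to get convergence of $\ro^n$) lets me extract a solution $(\ro,v,\n P)$ with the stated regularity, and the energy balance passes to the limit as an equality because the weak limit is in fact strong. Finally, since we have now established $\n v \in L^1_{loc}(0,\infty;L^\infty)$ and $\ro_0\in\mathcal{L}\subset BMO$, the uniqueness criterion of \cite{DanMu} (Lagrangian reformulation, density constant along characteristics, parabolic estimate for the velocity difference closed by Gronwall) applies verbatim and gives uniqueness in the asserted class. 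The principal difficulty throughout remains Step 2 --- calibrating the $\mathcal{L}$-condition against Desjardins' logarithmic inequality and the ODE blow-up criterion so that the $H^1$ norm of $v$ stays bounded for all time rather than only locally.
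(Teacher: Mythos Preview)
Your proposal is correct and follows essentially the same route as the paper: truncate $\rho_0$, invoke the global existence result of \cite{DanMu} for bounded density, derive the uniform $H^1$ bound via Desjardins' inequality and an ODE blow-up argument (the paper fixes one sufficiently large $p$ rather than a time-dependent $p=p(t)$; the $\mathcal{L}$-condition is exactly what makes $K_0 = o(\log p)$ beat the integral $\int_{X_0}^\infty (x\log x)^{-1-2/(p-3)}\,dx \sim \log p$), then the time-weighted estimates and Stokes maximal regularity, and finally compactness plus the Lagrangian uniqueness argument of \cite{DanMu}. One small caution: your aside that $v_t = \rho^{-1}(\mu\Delta v - \nabla P - \rho v\cdot\nabla v)$ near $t=0$ is not available in the presence of vacuum; the paper instead obtains $v \in H^\gamma(0,T;L^{p_2})$ directly from $\sqrt{t}\,v_t \in L^2(0,T;L^q)$ (itself a consequence of the weighted Poincar\'e inequality \eqref{Poincare:l:h}, not of dividing by $\rho$) by a straightforward time-interpolation, Lemma~\ref{lem:time:regular}.
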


\begin{rmq}
      \label{rmq:no:smallnest}
     
      The exact condition for the initial data that we need states that
    \begin{equation}\label{new_initial} \lim_{p\to\infty} \frac{1}{\log p}\|\varrho_0\|_p^{2+\frac{6}{p-3}}\log^{1+\frac{2}{p-3}}\left( e+\frac{\normede{\ro-M}^2}{M^2}+\normep{\ro_0}\normede{\sqrt{\ro_0}v_0}^2\right) =0. \end{equation}
    However, since $\sqrt{\ro_0}v_0\in L^2(\Omega)$, this estimate is satisfied if $\ro_0\in\mathcal{L}$.
    
\end{rmq}

\begin{rmq}
Assuming only that $\ro_0\in L^p(\Omega)$ for some fixed $3<p<\infty$, we obtain a local existence result. In that case, the solution exists on $[0,\infty)$ provided that the initial data satisfy
\begin{multline*}
    \normep{\ro_0}^{\frac{2p}{p-3}}\normede{\sqrt{\ro_0}v_0}^{\frac{2(p-1)}{p-3}} \log^{\frac{p-1}{p-3}}\left( e+\frac{\normede{\ro-M}^2}{M^2}+\normep{\ro_0}\normede{\sqrt{\ro_0}v_0}^2\right)\|\sqrt{\rho_0}v_0\|_2^2\\
    < c_0'\int_{e}^\infty \frac{dx}{(x\log x)^{\frac{p-1}{p-3}}} 
\end{multline*}
for some positive constant $c_0'$ independent of the data. Alternatively, for arbitrary large initial data we obtain a solution on the interval $[0,T^*]$, where $T^*$ is such that
\begin{multline*} \normep{\ro_0}^{\frac{2p}{p-3}}\normede{\sqrt{\ro_0}v_0}^{\frac{2(p-1)}{p-3}} \log^{\frac{p-1}{p-3}}\left( e+\frac{\normede{\ro-M}^2}{M^2}+\normep{\ro_0}\normede{\sqrt{\ro_0}v_0}^2\right)\int_0^{T^*} \|\nabla v(\tau)\|_2^2 d\tau \\
< c_0'\int_e^\infty \frac{dx}{(x\log x)^{\frac{p-1}{p-3}}}. \end{multline*}

\end{rmq}

Our second result addresses the issue of global well-posedness of (\ref{INS}) when the initial density is bounded away from vacuum. Specifically, we assume that the initial density belongs to the following space:
\begin{align}
    \label{def:Yudo:null}
    \mathcal{Y}_0:= \left\{f\in \bigcap_{p\geq 1}L^p(\Omega): \lim_{p\to\infty}\frac{1}{ p}\|f\|_p=0\right\}. 
\end{align}
Note that both $\mathcal{L}$ and $\mathcal{Y}_0$ are the subspaces of some Yudovich spaces $\mathcal{Y}^{\Theta(p)}$ with suitable $\Theta(p)$. Defining the Yudovich space as (see e. g. \cite{crippaGiorgio24})
\[ \mathcal{Y}^{\Theta(p)}:=\left\{f\in\bigcap_{p\geq 1}L^p(\Omega): \sup_{p\geq 1}\frac{\|f\|_p}{\Theta(p)}<\infty \right\}, \]
we immediately see that $\mathcal{Y}_0\subset\mathcal{Y}^p$, and moreover $ \mathcal{Y}^{(\sqrt{\log p})^{1-\varepsilon}} \subset \mathcal{L}\subset\mathcal{Y}^{\sqrt{\log p}}$, for any $0<\varepsilon<1$.

In order to find a connection with some more classical functional spaces, we prove in Appendix \ref{app_L} that 
\begin{align}
    \label{emb:Y:L:BMO}
    L^\infty(\Omega)\subsetneq \mathcal{L}\subset  \mathcal{Y}_0 \subsetneq  L^{\exp}(\Omega).
\end{align}
Our second global existence and uniqueness statement reads as follows:
\begin{thm}
    \label{thm:Lp:ro>0}
    Let $\Omega$ be a $C^2$ bounded subset of $\mathbb{R}^2$ or the torus $ \mathbb{T}^2 $. Assume that the initial data $(\ro_0,v_0)$ satisfy 
    \begin{align}
        \label{data:ass:ro>0}
        \begin{split}
            & v_0\in H_0^1(\Omega), \quad \dvg v_0=0,\\
            & \ro_0\in \mathcal{Y}_0, \quad \ro_0\geq \ro_*>0, \quad M:= \int_\Omega \varrho_0\;\dd x >0.
        \end{split}
    \end{align}
  
    Then there exists a unique solution  $(\ro, v, \n P)$ to (\ref{INS}) with data $(\ro_0, v_0)$ fulfilling the conservation of momentum (\ref{conser:momen}) (in the case $\Omega=\mathbb{T}^2$), the conservation of mass \eqref{conser:mass}, the balance energy  \eqref{energy:balance} and the conservation of Lebesgue norm \eqref{conser:BMO:Lp}.
    
    Furthermore, the density $\rho$ satisfies the condition $\rho \geq \rho_*$, and the solution $(\rho, v, \nabla P)$ adheres to all regularity properties detailed in Theorem \ref{thm:Lp:density}.
   
\end{thm}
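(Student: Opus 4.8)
The plan is to deduce Theorem~\ref{thm:Lp:ro>0} from Theorem~\ref{thm:Lp:density} with minimal extra work, exploiting the embedding $\mathcal{Y}_0\subset\mathcal{L}$ established in Appendix~\ref{app_L} (see \eqref{emb:Y:L:BMO}) together with the observation that the additional lower bound $\ro_0\geq\ro_*>0$ is \emph{propagated} by the flow and only \emph{helps} the estimates. First I would note that if $(\ro_0,v_0)$ satisfies \eqref{data:ass:ro>0}, then a fortiori $\ro_0\in\mathcal{Y}_0\subset\mathcal{L}$, $\ro_0\geq0$ and $M>0$, so the hypotheses \eqref{data:ass} of Theorem~\ref{thm:Lp:density} hold; moreover $\sqrt{\ro_0}v_0\in L^2$ automatically since $\ro_0\in\bigcap_p L^p$ and $v_0\in H^1_0\hookrightarrow L^q$ for every $q<\infty$. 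Consequently Theorem~\ref{thm:Lp:density} already yields a global solution $(\ro,v,\n P)$ with \emph{all} the claimed conservation laws (\eqref{conser:momen}, \eqref{conser:mass}, \eqref{energy:balance}, \eqref{conser:BMO:Lp}) and \emph{all} the regularity statements — these are precisely the ones asserted in Theorem~\ref{thm:Lp:ro>0}. Uniqueness in the class carrying the estimate \eqref{n2:v:inLin:L2} also transfers verbatim (it is obtained in Theorem~\ref{thm:Lp:density} via \cite{DanMu}). Thus the \emph{only} genuinely new assertion to verify is the lower bound $\ro\geq\ro_*$ for the constructed solution.

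To prove $\ro\geq\ro_*$, the key point is that $\ro$ solves the transport equation $\ro_t+v\cdot\n\ro=0$ with a divergence-free velocity field $v$ that, by the regularity output of Theorem~\ref{thm:Lp:density}, satisfies $\n v\in L^1_{loc}(0,T;L^\infty)$ — indeed the time-weighted bound $\n^2(\sqrt t\,v)\in L^q(0,T;L^\lambda)$ with $\lambda<\tfrac{2q}{q-2}$, combined with Sobolev embedding in two dimensions (take $q$ large, $\lambda>2$), gives $\n v\in L^1_{loc}(0,T;C(\overline\Omega))$ and hence a well-defined Lipschitz-in-space flow map $X(t,\cdot)$ which is a measure-preserving (since $\dvg v=0$) homeomorphism of $\Omega$ (and which preserves $\partial\Omega$ because $v=0$ on the boundary, resp. acts on $\T^2$). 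Along this flow $\ro$ is constant: $\ro(t,X(t,y))=\ro_0(y)$ for a.e.\ $y$. Therefore $\operatorname*{ess\,inf}_\Omega\ro(t,\cdot)=\operatorname*{ess\,inf}_\Omega\ro_0\geq\ro_*$ for every $t$, which is the desired bound; one states this rigorously through the renormalized/DiPerna--Lions theory for the transport equation, or directly via the flow, using that $\{x:\ro(t,x)<\ro_*\}=X(t,\{y:\ro_0(y)<\ro_*\})$ has zero measure. (Equivalently, one may test the transport equation against $(\ro-\ro_*)_-^{\,p}$ and pass to the limit, but the Lagrangian argument is cleaner once \eqref{n2:v:inLin:L2} is in hand.)

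The remaining bookkeeping is to record that all the regularity classes listed in Theorem~\ref{thm:Lp:density} are inherited without change — this is immediate since they were proved for \emph{any} admissible data, in particular for data merely satisfying $\ro_0\in\mathcal{L}$, and the extra hypothesis $\ro_0\geq\ro_*$ does not weaken any of them — and to note that the condition \eqref{new_initial} of Remark~\ref{rmq:no:smallnest} is satisfied because $\ro_0\in\mathcal{Y}_0\subset\mathcal{L}$ (one uses here $\mathcal{Y}_0\subset\mathcal{L}$, i.e.\ the first inclusion in \eqref{emb:Y:L:BMO}: for $f\in\mathcal Y_0$, $\|f\|_p=o(p)$ forces $\tfrac1{\log p}\|f\|_p^2\log(1+\|f\|_p)\to0$), so no smallness is required. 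I do not expect any real obstacle here: the substance of the theorem is entirely contained in Theorem~\ref{thm:Lp:density} and in the Appendix embedding, and the one new ingredient — persistence of the positive lower bound on the density — is a standard consequence of the transport structure together with the already-established log-Lipschitz (in fact $L^1_t$-Lipschitz) regularity of $v$. If there is a delicate point, it is purely technical: justifying the flow map and the identity $\ro(t,X(t,\cdot))=\ro_0$ at the available regularity level, which is handled by the DiPerna--Lions theory since $v\in L^1_{loc}(0,T;W^{1,\infty})$ and $\dvg v=0$.
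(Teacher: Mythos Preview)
Your reduction to Theorem~\ref{thm:Lp:density} rests on the claimed inclusion $\mathcal{Y}_0\subset\mathcal{L}$, but this inclusion is false; the paper states (and proves) the opposite one, $\mathcal{L}\subset\mathcal{Y}_0$, in \eqref{emb:Y:L:BMO}. Concretely, membership in $\mathcal{L}$ forces $\|f\|_p$ to grow at most like $\sqrt{\log p}$ (cf.\ the remark $\mathcal{L}\subset\mathcal{Y}^{\sqrt{\log p}}$), while $\mathcal{Y}_0$ only asks $\|f\|_p=o(p)$. For instance, if $\|f\|_p\sim\sqrt{p}$ then $f\in\mathcal{Y}_0$, yet
\[
\frac{1}{\log p}\,\|f\|_p^2\,\log(1+\|f\|_p)\;\sim\;\frac{p\cdot\tfrac12\log p}{\log p}\;=\;\tfrac{p}{2}\;\longrightarrow\;\infty,
\]
so $f\notin\mathcal{L}$. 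Your parenthetical justification (``$\|f\|_p=o(p)$ forces $\tfrac1{\log p}\|f\|_p^2\log(1+\|f\|_p)\to0$'') is therefore incorrect, and with it the whole reduction collapses: Theorem~\ref{thm:Lp:density} simply does not apply to generic $\mathcal{Y}_0$ data.

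The paper does \emph{not} reduce Theorem~\ref{thm:Lp:ro>0} to Theorem~\ref{thm:Lp:density}. Instead it establishes a separate global $H^1$ a~priori estimate (Proposition~\ref{pro:sobo:est:r0>0}) that genuinely uses the lower bound $\ro\ge\ro_*$: from \eqref{energy:balance} one gets $v\in L^\infty(\mathbb{R}_+;L^2)$ directly, which allows the troublesome term $\int\ro|v\cdot\n v|^2$ to be bounded via H\"older and Sobolev without recourse to the Desjardins inequality \eqref{Derjadin:ine}. This produces the differential inequality $\tfrac{d}{dt}X\le g(t)X^{1+\varepsilon_p}$ with $\varepsilon_p\sim 1/p$ and $\int g\le C\|\ro_0\|_p^{(p+1)/(p-3)}\|\sqrt{\ro_0}v_0\|_2^{2(1+\varepsilon_p)}/\ro_*^{\varepsilon_p}$; the non-blow-up condition \eqref{cnd:X:g} is then met for large $p$ precisely because $\|\ro_0\|_p=o(p)$, i.e.\ because $\ro_0\in\mathcal{Y}_0$. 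Once this global Sobolev bound is in hand, the weighted estimates of Section~\ref{sec:weighted:estimates}, the existence via approximation, and the uniqueness via \cite{DanMu} proceed as in the first theorem. Your argument for the persistence of the lower bound $\ro\ge\ro_*$ is fine (and indeed the paper tacitly uses it), but it is not the missing ingredient; the missing ingredient is the independent global $H^1$ estimate under the weaker $\mathcal{Y}_0$ hypothesis.
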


Let us report on the main ideas leading to Theorems \ref{thm:Lp:density} and \ref{thm:Lp:ro>0}. We focus here on the case $\Omega=\mathbb{T}^2$. Assuming
that we are given a solution $(\ro, v)$ to (\ref{INS}), the first step is to establish global-in-time a priori estimates for the $H^1$ norm of $v$ in terms of the data and of the parameters of the system.  The overall
strategy has some similarities with the work \cite{DanMu} dedicated to system (\ref{INS}) with bounded initial density and $v_0\in H^1$. Performing a basic energy method on the momentum equation, we will succeed in extracting some parabolic smoothing effect even if the density is rough and vanishes. This enables us to get a control on $\n v $ in $ L^\infty(L^2)$, $\sqrt{\ro}v_t $ in $L^2(L^2)$ and $ \n^2 v, \n P $ in $L^2(L^r)$  for $r<2$; it is worth mentioning that the restriction on $r$ comes from the fact that the density may be unbounded - for $\ro_0\in L^\infty(\Omega)$ one obtains $ \n^2 v, \n P $ in $L^2(L^2)$, as it was done in \cite{DanMu}. In fact, the proof of Theorem \ref{thm:Lp:ro>0} uses the same argument expected for propagating the Sobolev regularity.

The main tool to obtain the above regularity, if the density $\ro$ contains regions of vacuum (i.e $\ro\ge 0$),  is the following logarithmic interpolation inequality
\begin{multline}
    \label{Derjadin:ine}
    \left( \intT\ro v^4 \right)^{\frac{1}{2}} \le C_p \normede{\sqrt{\ro}v}\left|\intT\ro v\right|\\
        +C_p \normede{\sqrt{\ro}z} \normede{\n v} \log^{\frac{1}{2}}\left( e+\frac{\normede{\ro-M}^2}{M^2}+\normep{\ro}\frac{\normede{\n v}^2}{\normede{\sqrt{\ro}v}^2}\right),
\end{multline}
where $M$ stands for the average of the density $\ro$ and $p>1$. It is important to emphasize  that inequality \eqref{Derjadin:ine} has been discovered by B. Derjardins \cite{Der97} and is an appropriate substitute of the classical Ladyzhenskaya inequality for constant density
\begin{align*}
    \normes{v}{4}^2
\le C \normede{v}\normede{\n v}.
\end{align*}
To be more clear, after testing (\ref{INS})$_2$ by $v_t$,  we are able to get a priori global in time bound on $\n v$ in $L^\infty(L^2)$. The key is to estimate the trouble-making term $\normede{\sqrt{\ro} v\cdot \n v}^2$, which is handled differently, depending whether the density is distant from vacuum or not. Below we signal the differences between these two cases:
\begin{description}
    \item[$\bullet$ The density contains regions of vacuum ($\ro\ge 0$).] In this case, the estimate \eqref{Derjadin:ine} comes into play. In fact, thanks to H\"{o}lder and Young's inequality we get that
\begin{align*}
     \intT\ro |v\cdot\n v|^2 &\le   C_{r,p} \normede{\n v}^2  \normep{\ro}^{\frac{1}{2(1-\alpha)}} \normede{\sqrt{\ro}|v|^2}^\frac{1}{1-\alpha}+ \frac{1}{4C_r\normep{\ro}} \normes{\n^2 v}{r}^{2}
\end{align*}
with 
\begin{align*}
      \frac{1}{1-\alpha}=\dfrac{2-\frac{2}{p}}{1-\frac{3}{p}}.
\end{align*}
 The last term may be absorbed, and the first one may be handled by the inequality \eqref{Derjadin:ine}. Consequently, one gets that 
\begin{align*}
     \frac{d}{dt} \normede{\n v}^2+ c_0 \left(\normede{\sqrt{\ro}v}^2+\normer{\n P,\n^2 v}^2\right) \le \frac{1}{p}f\normep{\ro}\normede{\n v}^{\frac{1}{1-\alpha}} \log^{\frac{1}{2(1-\alpha)}}(e+\normede{\n v}^2),
\end{align*}
where $f\in L^1(\mathbb{R}_+)$ and $c_0>0$. 
Since $\frac{1}{1-\alpha}>2$, there appears a higher power of $\normede{\n v}^2$ in the right- hand side of the above inequality. In consequence, for fixed $p$ this estimate provides only a local estimate, which will go to infinity in finite time. However, we can control the blow-up with respect to $p$, and taking $p\to\infty$ we obtain a global estimate provided that (\ref{new_initial}) holds.
 \item[$\bullet$ The density is distant from vaccum ($\ro\ge \ro_*>0$).]  In this case, we automatically obtain the information that  $v\in L^\infty(L^2(\Omega))$, which follows directly from \eqref{energy:balance}. Hence combining Sobolev inequality and H\"{o}lder inequality, we have, for all $\varepsilon>0$

     \begin{align*}
        \intT \ro |v\cdot\n v|^2\le  \frac{\varepsilon}{\normep{\ro}}\normes{\n^2 v}{r}^2+ \varepsilon^{-\frac{2}{p-3}} 
        \normep{\ro}^{\frac{p+1}{p-3}}   \normes{v}{2}^{2\frac{p-1}{p(p-3)}}\normede{\n v}^{2(2+\varepsilon_p)} ,
    \end{align*}
  where   
$\varepsilon_p \simeq \frac{1}{p}$ as $p \to \infty$.  \\
Choosing suitably $\varepsilon$ we can absorb the first term in the right- hand side. At the end we arrive at, 
    \begin{align*}
      &  \frac{d}{dt}\normede{\n v}^2+ c_0 \left(\normede{\sqrt{\ro}v}^2+\normer{\n P,\n^2 v}^2\right) \le g(t)X^{1+\varepsilon_p}\\
        &\text{with   } g(t):=C  \normep{\ro}^{\frac{p+1}{p-3}}   \normes{v}{2}^{2\frac{p-1}{p(p-3)}}\normede{\n v}^2\in L^1(\mathbb{R}_+).
    \end{align*}
Solving this inequality and using the fact that the density $\ro$ satisfy $\normep{\ro}=o(p)$, as $p \to \infty$, we obtain a global estimate for the velocity  $v$ in $H^1$.
\end{description}

The next step is to recover the regularity \eqref{n2:v:inLin:L2}
which is essential to reformulate the system (\ref{INS}) in Lagrangian coordinates and in consequence get uniqueness of solutions.
To achieve it, the general idea is to use \textit{time weighted estimates} to glean some regularity
on $v_t$. Then, we are able to transfer the time regularity to space regularity thanks to elliptic
estimates and functional embeddings. The main idea is to differentiate the momentum equation (\ref{INS})$_2$ with respect to time and multiply it by $tv_t$. Next, using Gagliardo-Nirenberg-Sobolev inequality to handle the trouble-making  terms, one may exhibit the following estimates: 
\begin{align}
    \label{:t:w:e}
    \intT\ro t|v_t|^2+\int^T_0\left(\intT t|\n v_t|^2\right)dt\le C_{0,T},
\end{align}
where $C_{0,T}$ depending only on $\normep{\ro_0}$, $\normede{\sqrt{\ro}v}$, $\normede{\n v_0}$ and $T$.
Consequently, by Sobolev embedding and weighted Poincaré inequality \eqref{Poincare:l:h}, 
\begin{align}
    \label{conse:e:w:e:1}
      \lVert \sqrt{t} v_t\rVert_{L^2(0,T;L^q)} \le C_{0,T,q}, \esp{for all} q<\infty.
\end{align}
Another consequence of \eqref{:t:w:e} is that we have some control on the regularity of $v$ with respect to the time variable. This is given by the following estimate:
\begin{align}
    \label{conse:e:w:e:2}
    \lVert v \rVert_{H^{\frac{1}{2}-\alpha}(0,T;L^q)} \le C_{0,T,q}, \esp{for all} q<\infty,\; \alpha\in \left(0,\frac{1}{2}\right).
\end{align}
Then we also have a third consequence of estimate \eqref{:t:w:e}. Indeed, from the classical maximal regularity properties of the following  Stokes system, as in \cite{DanMUch13} (or in \cite{Mucha01,MuchaWoZaj02} in the context of the compressible Stokes system), 
\begin{align}
    \label{stokes:system}
    \begin{cases}
        -\Delta\sqrt{t}v +\n \sqrt{t} P=-\ro \sqrt{t}v_t-\sqrt{t}\ro v\cdot \n v \esp{in} \T^2,\\
        \dvg{\sqrt{t}v}=0 \esp{in} \T^2.
    \end{cases}
\end{align}
and \eqref{:t:w:e} one derives for all $0<\varepsilon<1$ and $2\le \eta\le \infty$ that
\begin{equation}
\label{est:Pres:n2v:SS}
\|\nabla^2\sqrt{t}v\|_{L^\eta(0,T;L^{\lambda-\varepsilon})} + \|\nabla\sqrt{t}P\|_{L^\eta(0,T;L^{\lambda-\varepsilon})} \leq C_{0,T,\varepsilon} \quad \text{with} \quad \lambda = \frac{2\eta}{\eta-2}. \end{equation}

Finally, having \eqref{est:Pres:n2v:SS} and $\n v$ in $L^\infty(L^2)$ ensures that $\sqrt{t}\n v $ is bounded in $ L^{2+\varepsilon}_T(L^\infty) $ for small enough $\varepsilon>0$, and thus, by H\"{o}lder inequality
\begin{align*}
   \lVert  \nabla v\rVert_{L^1_T(L^\infty)}=\int_0^T\normeinf{ \sqrt{t}\n v}\frac{dt}{\sqrt{t}}\le C_\varepsilon T^{\frac{\varepsilon}{2(2+\varepsilon)}} \lVert \sqrt{t}\n v\rVert_{ L^{2+\varepsilon}(L^\infty)  }.   
\end{align*}

The final step focuses on proving the uniqueness of the system (\ref{INS}). The estimate \eqref{n2:v:inLin:L2}
 allows us to reformulate the system (\ref{INS}) in Lagrangian coordinates without imposing higher regularity requirements on the data than those needed for existence. As highlighted in \cite{DanMu}, the hyperbolic nature of the mass equation results in the loss of one derivative for the density, preventing the application of a direct method based on stability estimates for (\ref{INS}). This derivative loss for the density also leads to a corresponding loss for the velocity. Since $\ro$ lacks sufficient regularity, any derivative loss is intolerable. To illustrate, if $(\ro_1, v_1)$ and $(\ro_2, v_2)$ are two solutions of (\ref{INS}) originating from the same initial data, and we denote by $(\delta \ro, \delta v)$ the difference between these two solutions, then we obtain from (\ref{INS})$_1$
%
\begin{align}
    \label{eq:del:ro}
    \frac{d}{dt}\delta \ro + v^2\cdot\n \delta \ro= -\underbrace{\delta v}_{L^2}\cdot \overbrace{\n \ro^1}^{\Dot{W}^{-1,p}}.
\end{align}
Therefore, it is not immediately clear whether uniqueness can be proven using this formulation. To overcome this challenge, as done in \cite{DanMu}, we rewrite the system (\ref{INS}) in \textit{Lagrangian coordinates}. In this framework, the loss of derivative does not occur when comparing two solutions of (\ref{INS}) originating from the same initial data. An additional benefit of using Lagrangian coordinates is that the density remains constant along the flow. Estimating the difference between solutions can be approached through basic energy arguments. The main challenge lies in the fact that the divergence is no longer zero, requiring the resolution of a "twisted" divergence equation to eliminate the non-divergence-free component. By applying the Gronwall's lemma, we establish uniqueness over a sufficiently small time interval, and by induction, extend this uniqueness over the entire existence time interval.
\\

\begin{rmq} 

For simplicity, we focus on the case $\Omega = \mathbb{T}^2$ in the following sections. A similar analysis applies to bounded domains with no-slip boundary conditions, but we omit those details here to enhance readability.

\end{rmq}

\section{Sobolev regularity }
\label{Sobolev:regu:sec}

The main goal of this section is to prove the following a priori estimates:
\begin{prop}
\label{pro:sobo:est}
Let $(\ro,v)$ a regular solution of (\ref{INS}) on $ [0,T]\times\T^2$ that satisfies 
\eqref{data:ass}.
Then there exists a positive constant $c_0$ depending only on the initial data such that for all $t\in[0,T]$
\begin{align}
    \label{sob:ine:p}
    \begin{split}
\sup_{\tau \in (0,t)}  \intT|\n v|^2(\tau) +\frac{1}{2}\int^t_0\intT\ro|v_t|^2 
    \le  c_0.
    \end{split}
\end{align}
Moreover, for any $r<2$ we have
\begin{equation}\label{est_nabla2}
    \int^t_0\normer{\n^2 v}^2+\int^t_0\normer{\n P}^2 \leq C(\|\rho_0\|_p)
\end{equation}
where 
$p=\frac{r}{2-r}$ and $C(\|\rho_0\|_p)\to\infty$ as $\|\rho_0\|_p\to\infty$.
\end{prop}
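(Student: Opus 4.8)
The plan is to run a basic energy estimate for the momentum equation, close it by elliptic (Stokes) regularity together with Desjardins' inequality \eqref{Derjadin:ine}, and then turn this into a \emph{global} bound on $\n v$ in $L^\infty(L^2)$ by means of an Osgood-type blow-up criterion in which the density exponent $p$ is sent to infinity. First, multiplying (\ref{INS})$_2$ by $v_t$, integrating over $\T^2$, using $\dvg v_t=0$ to discard the pressure term and integrating by parts in the dissipative term, one gets
\begin{equation*}
\frac{\mu}{2}\frac{\dd}{\dd t}\normede{\n v}^2+\normede{\sqrt{\ro}v_t}^2=-\intT\ro\,(v\cdot\n v)\cdot v_t\le\frac12\normede{\sqrt{\ro}v_t}^2+\frac12\intT\ro|v|^2|\n v|^2,
\end{equation*}
hence
\begin{equation*}
\frac{\mu}{2}\frac{\dd}{\dd t}\normede{\n v}^2+\frac12\normede{\sqrt{\ro}v_t}^2\le\frac12\intT\ro|v|^2|\n v|^2.
\end{equation*}
I will use repeatedly: $\normede{\n v}^2\in L^1(\R_+)$, with $\int_0^\infty\normede{\n v}^2\,\dd t\le\frac{1}{2\mu}\normede{\sqrt{\ro_0}v_0}^2$, and $\sup_t\normede{\sqrt{\ro}v}\le\normede{\sqrt{\ro_0}v_0}$ (both from the energy balance \eqref{energy:balance}); $\normep{\ro(t)}=\normep{\ro_0}$ for every $t$ and every $p$ (conservation of Lebesgue norms \eqref{conser:BMO:Lp}); and, in the torus case, $\normede{\sqrt{\ro}v}\ge M^{-1/2}\,\bigl|\intT\ro_0 v_0\bigr|$ by Cauchy--Schwarz and \eqref{conser:momen}.

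\emph{Elliptic regularity and the critical nonlinearity.} Reading (\ref{INS})$_2$ as the stationary Stokes system $-\mu\Delta v+\n P=-\ro v_t-\ro\,v\cdot\n v$, $\dvg v=0$, maximal $L^r$-regularity on $\T^2$ gives, for $1<r<2$ and $p:=\frac{r}{2-r}$ (so that $\frac1r=\frac1{2p}+\frac12$), after writing $\ro g=\sqrt{\ro}\cdot\sqrt{\ro}g$ and using Hölder,
\begin{equation*}
\normer{\n^2 v}+\normer{\n P}\le C_r\,\normep{\ro_0}^{1/2}\bigl(\normede{\sqrt{\ro}v_t}+\normede{\sqrt{\ro}\,v\cdot\n v}\bigr),
\end{equation*}
and the restriction $r<2$ is precisely the price of $\ro$ being possibly unbounded. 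The heart of the matter is to estimate $\intT\ro|v|^2|\n v|^2$, which dominates $\intT\ro|v\cdot\n v|^2$. By Hölder with weight $\ro$ and exponents $(p,p')$, the two-dimensional Sobolev embedding $W^{1,r}\hookrightarrow L^{2p}$ and Gagliardo--Nirenberg (this is where $\normer{\n^2 v}$ enters), and Young's inequality to split off a small multiple $\frac{1}{4C_r\normep{\ro_0}}\normer{\n^2 v}^2$ — to be absorbed after feeding the Stokes bound back into the energy inequality — the estimate reduces to a bound on $\normede{\sqrt{\ro}|v|^2}$, supplied by Desjardins' inequality \eqref{Derjadin:ine}. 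Using the standing facts above to control the right-hand side of \eqref{Derjadin:ine} by the data and by $\normede{\n v}$, and writing $X(t):=\normede{\n v(t)}^2$, $p>3$ and $\theta_p:=\frac{1}{2(1-\alpha)}=\frac{p-1}{p-3}>1$, the absorbed energy inequality takes the form
\begin{equation*}
\frac{\dd}{\dd t}X+c_0\bigl(\normede{\sqrt{\ro}v_t}^2+\normer{\n^2 v}^2+\normer{\n P}^2\bigr)\le f(t)\bigl(A_p+B_p\,X^{\theta_p}\log^{\theta_p}(e+X)\bigr),
\end{equation*}
with $f:=\normede{\n v}^2\in L^1(\R_+)$ and $A_p,B_p$ explicit in the data, $B_p$ being comparable to the quantity required to be $o(\log p)$ in \eqref{new_initial}.

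\emph{Osgood argument and conclusion.} For fixed $p$ the exponent $\theta_p>1$ makes the right-hand side superlinear, so by itself this inequality yields only a local bound. Dropping the nonnegative dissipative terms, separating variables, integrating, and using $\int_0^\infty f\le\frac{1}{2\mu}\normede{\sqrt{\ro_0}v_0}^2$, one finds that $X$ stays bounded on $[0,\infty)$ provided $B_p\int_0^\infty f\,\dd t<\int_{e+X(0)}^\infty(\sigma\log\sigma)^{-\theta_p}\,\dd\sigma$ (the $A_p$-term being harmless by Gronwall). Since $\theta_p=1+\frac{2}{p-3}\downarrow 1$ as $p\to\infty$, the right-hand side of this smallness condition increases to $\int_e^\infty(\sigma\log\sigma)^{-1}\,\dd\sigma=+\infty$, whereas $B_p=o(\log p)$ by \eqref{new_initial} (hence by $\ro_0\in\mathcal{L}$); thus the condition holds for all $p$ large enough. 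Fixing one such $p$ gives $\sup_{t\ge0}\normede{\n v}^2\le c_0$, $c_0$ depending only on the data, which is the first half of \eqref{sob:ine:p}; integrating the above differential inequality on $(0,t)$ then yields $\frac12\int_0^t\normede{\sqrt{\ro}v_t}^2\le c_0$ together with $\int_0^t\normede{\sqrt{\ro}\,v\cdot\n v}^2\le c_0$. Finally, for any $r<2$ — now with no largeness of $p$ needed, only $\normep{\ro_0}<\infty$, which always holds — the Stokes bound gives $\int_0^t(\normer{\n^2 v}^2+\normer{\n P}^2)\le C_r\normep{\ro_0}\int_0^t(\normede{\sqrt{\ro}v_t}^2+\normede{\sqrt{\ro}\,v\cdot\n v}^2)\le C(\normep{\ro_0})$, i.e. \eqref{est_nabla2}, the constant blowing up as $\normep{\ro_0}\to\infty$ through the factor $\normep{\ro_0}$.

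\emph{Main obstacle.} The delicate point is the bookkeeping around Desjardins' inequality: one must track all powers of $\normep{\ro_0}$ so that the $\normer{\n^2 v}^2$ term produced by Gagliardo--Nirenberg can indeed be absorbed against the Stokes bound, and so that the resulting constant $B_p$ in the superlinear ODE grows in $p$ no faster than permitted by \eqref{new_initial} — slowly enough to be beaten, for large $p$, by the Osgood integral $\int_e^\infty(\sigma\log\sigma)^{-\theta_p}\,\dd\sigma$. That the density may vanish, so that no a priori control of $v$ itself in $L^2$ is available, is exactly why Desjardins' inequality (rather than the classical Ladyzhenskaya inequality) is indispensable here.
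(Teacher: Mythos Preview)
Your strategy is exactly the paper's: test by $v_t$, Stokes regularity in $L^r$ with $r<2$, absorb a small multiple of $\normer{\n^2 v}^2$, invoke Desjardins' inequality for $\normede{\sqrt{\ro}|v|^2}$, and derive an Osgood-type ODE with exponent $\theta_p=\frac{p-1}{p-3}$. The identification of \eqref{new_initial} as the relevant growth condition on $B_p$ is also correct.

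There is, however, a genuine gap in the Osgood step. You argue: ``the right-hand side of this smallness condition increases to $\int_e^\infty(\sigma\log\sigma)^{-1}\,\dd\sigma=+\infty$, whereas $B_p=o(\log p)$ \dots; thus the condition holds for all $p$ large enough.'' But mere divergence of the integral does not beat a quantity that is only $o(\log p)$ and may itself be unbounded: if the integral grew like $\sqrt{\log p}$ while $B_p\sim (\log p)/\log\log p=o(\log p)$, your conclusion would fail. What is needed --- and what you have not supplied --- is the quantitative lower bound
\[
\liminf_{s\to 0}\;\frac{1}{|\log s|}\int_{X_0}^\infty\frac{\dd\sigma}{(\sigma\log\sigma)^{1+s}}\;\ge\;1,\qquad s=\theta_p-1=\frac{2}{p-3},
\]
so that the Osgood integral grows at least like $\log p$. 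The paper proves this as a separate lemma (via a convexity argument, or equivalently by the substitution $u=\log\sigma$ and truncation at $u=1/s$). This is not bookkeeping: it is precisely the point at which the definition of $\mathcal{L}$ is calibrated to the problem, and without it the comparison ``LHS $=o(\log p)$, RHS $\to\infty$'' is logically inconclusive.

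One minor point: integrating your differential inequality does \emph{not} yield $\int_0^t\normede{\sqrt{\ro}\,v\cdot\n v}^2\le c_0$, since that term lives on the right-hand side, not the left. You do not need it anyway: the term $\normer{\n^2 v}^2$ already sits on the left (with a prefactor $\sim\normep{\ro_0}^{-1}$, which you should make explicit rather than hide in the symbol $c_0$), so integrating in time directly gives \eqref{est_nabla2} once $\sup_t\normede{\n v}^2\le c_0$ is known --- exactly as the paper proceeds.
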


 \begin{rmq}
    \label{rmq:conse:Poi:Sobo:reg}
  From Proposition \ref{pro:sobo:est} and Poincar\'e inequality (\ref{Poincare:l:h}) it follows that for all $q<\infty$
    \[ \|v\|_{L^\infty(0,T;L^q)}\leq C_0, \]
    where $C_0$ depends only on the initial data. Using additionally the fact that $\|\ro\|_p=\|\ro_0\|_p<\infty$ for all $p<\infty$, we further obtain for all $0\leq \alpha<\infty$, $1\leq q <\infty$
    \begin{equation}\label{normq:al:v} \|\ro^\alpha v\|_{L^\infty(0,T;L^q)}\leq C_0, \end{equation}
    where again $C_0$ depends only on $(\ro_0,v_0)$. \\
    In the case when $\ro_0\in L^p(\Omega)$ for fixed $p$, the estimate (\ref{normq:al:v}) holds for any $ 0\le \alpha <p $ and $1\le q < \frac{p}{\alpha}$ instead.
    
 \end{rmq}

\begin{proof}[Proof of Proposition \ref{pro:sobo:est}]
As in \cite{DanMu} the proof consists in performing an energy method and introducing a suitable 
 'energy' functional that contains $H^1$ information on the velocity. Note that in contrast with \cite{DanMu}, the density is not necessary bounded, therefore in order to succeed in getting a time-independent control on the solution in terms of
the data, we will  need to combine an interpolation inequality and condition \eqref{new_initial}.  

    Testing the momentum equation of  (\ref{INS}) by $v_t$ yields
    \begin{align*}
       & \frac{1}{2}\frac{d}{dt}\intT|\n v|^2+\intT\ro|v_t|^2 = -\intT (\ro v\cdot\n v)\cdot v_t\\
        &\le \frac{1}{2} \intT\ro|v_t|^2 +\frac{1}{2}\intT\ro |v\cdot\n v|^2.
    \end{align*}
So we deduce that 
\begin{align}
    \label{1:est:nabla v:}
   \frac{d}{dt} \intT|\n v|^2+\intT\ro|v_t|^2 \le  \intT\ro |v\cdot\n v|^2.
\end{align}
From the momentum equation, we can rewrite the second derivative and the gradient of the pressure as follows: 
\begin{align*}
    -\Delta v+\n P=-\ro v_t-\ro v\cdot\n v.
\end{align*}
In conclusion, from the maximal regularity of  (\ref{INS}) on the torus $\T^2$, we have for $1< r<2$
\begin{align*}
   \normer{\n^2 v}^2+\normer{\n P}^2 &\le C_r\normer{\sqrt{\ro}(\sqrt{\ro}v_t+\sqrt{\ro}v\cdot\n v)}^2.
\end{align*}
Now, thanks to Hölder's inequality, we get
\begin{align}
 \label{2:est:nabla v:}
   \normer{\n^2 v}^2+\normer{\n P}^2 \le  C_r\normep{\ro}(\normede{\sqrt{\ro}v_t}^2+\normede{\sqrt{\ro}v\cdot\n v}^2)
\end{align}
with 
    $\frac{1}{r}= \frac{1}{2}+\frac{1}{2p}.$
Combining \eqref{1:est:nabla v:} and \eqref{2:est:nabla v:}, we deduce that 
\begin{align}
 \label{3:est:nabla v:}
    \frac{d}{dt}\intT|\n v|^2+\frac{1}{2}\intT\ro|v_t|^2+ \frac{1}{2C_r\normep{\ro}}\left(\normer{\n^2 v}^2+\normer{\n P}^2 \right)\le  \frac{3}{2}\intT\ro |v\cdot\n v|^2.
\end{align}
To handle the right- hand side of \eqref{3:est:nabla v:}, we use Hölder's inequality and Sobolev inequality that states
\begin{equation}
    \label{sobolev:ine:criti}
    \lVert\n v\rVert_{2p}\le C  \lVert\n^2 v\rVert_{r}.
\end{equation}
We deduce that:
\begin{align*}
   \intT\ro |v\cdot\n v|^2 &\le \normede{\sqrt{\ro}|v|^2}\normes{\ro^\frac{1}{4}\n v}{4}^2\\
   &\le  \normep{\ro}^{\frac{1}{2}} \normede{\sqrt{\ro}|v|^2} \normes{\n v}{q_4}^2\\
   &\le    \normep{\ro}^{\frac{1}{2}} \normede{\sqrt{\ro}|v|^2} \normede{\n v}^{2(1-\alpha)}\normes{\n^2 v}{r}^{2\alpha}.
\end{align*}
and $q_4\in (4,2p)$ and $\alpha\in (0,1)$ verify
\begin{equation}
    \label{alpha:q_4}
    \frac{1}{4}=\frac{1}{4p}+\frac{1}{q_4} \esp{and } \frac{1}{q_4}=\frac{\alpha}{2p}+\frac{1-\alpha}{2}.
\end{equation}
Let us remark that \eqref{alpha:q_4} provides 
\begin{equation}
    \label{cnd:p:alpha:q4}
    \frac{1}{2-2\alpha}=\dfrac{1-\frac{1}{p}}{1-\frac{3}{p}} \esp{and } p>3.
\end{equation}

Hence, using Young inequality yields 
\begin{align}
    \label{4:est:nabla v:}
     \intT\ro |v\cdot\n v|^2 &\le \frac{1}{4C_r\normep{\ro}} \normes{\n^2 v}{r}^{2} + C_{r,p} \normede{\n v}^2  \normep{\ro}^{\frac{1+2\alpha}{2(1-\alpha)}} \normede{\sqrt{\ro}|v|^2}^\frac{1}{1-\alpha}
\end{align}
with 
\begin{equation}
    \label{def C:r:p}
        C_{r,p}\overset{def}{=} (1-\alpha)\Big(\frac{1}{4\alpha}C_r \Big)^{-\frac{\alpha}{1-\alpha}}.
\end{equation}
Note that since $\alpha\to \frac{1}{2}$ when $p\to \infty$, the constant $C_{r,p}$ can be bounded independently of $p$. Using the lemma \ref{ladyn:log:cor:lem}, we are able to bound the term $  \normede{\sqrt{\ro}|v|^2}$. Remarking that the function $z\mapsto z\log(e+\frac{A}{z})$ (with $A>0$) is increasing over $[0,\infty)$, and remembering \eqref{energy:balance}, \eqref{conser:BMO:Lp} \eqref{conser:momen}, we have
\begin{align}
   \label{:est:nabla v:5}
   \normede{\sqrt{\ro}|v|^2}&\le C \normede{\sqrt{\ro_0}v_0} \left( \intT\ro_0v_0+C_p \normede{\n v} \log^{\frac{1}{2}} 
   \left( e+\frac{\normede{\ro-M}^2}{M^2}+\normep{\ro_0}\frac{\normede{\n v}^2}{\normede{\sqrt{\ro_0}v_0}^2}\right)
   \right)\nonumber
   \\
   &\le  C \normede{\sqrt{\ro_0}v_0} \left( e+ \normede{\n v} \right) \log^{\frac{1}{2}}
   \left( e+\frac{\normede{\ro-M}^2}{M^2}+\normep{\ro_0}\normede{\sqrt{\ro_0}v_0}^2\right)
   \log^{\frac{1}{2}}\left(e+\normede{\n v}^2\right)
\end{align}
Hence, inequality \eqref{4:est:nabla v:} becomes
\begin{align}
    \label{:est:nabla v:6}
     \intT\ro |v\cdot\n v|^2 &\le \frac{1}{4C_r\normep{\ro}} \normes{\n^2 v}{r}^{2} + K_0\normede{\n v}^2\left( e+ \normede{\n v}^2 \right)^{\frac{1}{2(1-\alpha)}}\log^{\frac{1}{2(1-\alpha)}}\left(e+\normede{\n v}^2\right)
\end{align}
with $K_0=: C_p \normep{\ro_0}^{\frac{1+2\alpha}{2(1-\alpha)}}\normede{\sqrt{\ro_0}v_0}^{\frac{1}{1-\alpha}} \log^{\frac{1}{2(1-\alpha)}}\left( e+\frac{\normede{\ro-M}^2}{M^2}+\normep{\ro_0}\normede{\sqrt{\ro_0}v_0}^2\right). $

Plugging (\ref{:est:nabla v:6}) into (\ref{3:est:nabla v:}), we get that in particular
\[ \frac{d}{dt}\int_{\T^2}|\nabla v|^2 + \frac{1}{2}\int_{\T^2}\rho|v|^2 \leq f(t)\left( e+ \normede{\n v}^2 \right)^{\frac{1}{2(1-\alpha)}}\log^{\frac{1}{2(1-\alpha)}}\left(e+\normede{\n v}^2\right),
\]
where $f(t):= \normede{\n v}^2 K_0$. Denoting
\begin{equation}\label{def:X:K_0}
    X(t) := e+ \intT|\n v|^2+\frac{1}{2}\int^t_0\intT\ro|v_t|^2,
\end{equation}
we end up with

\begin{align}
   \label{:est:nabla v:7}
   \frac{d}{dt}X(t) \leq f(t)(X\log X)^{1+s},
\end{align}
where $s=\frac{1}{2(1-\alpha)}-1=\frac{2}{p-3}$. Solving this ODE, we get
\begin{equation}\label{int_ineq} \int_{X_0}^{X(t)}\frac{dx}{(x\log x)^{1+s}} \leq \int_0^t f(\tau)d\tau. \end{equation}
Now we proceed by contradiction: assuming that the solution $X$ cannot be extended for the whole interval $[0,T]$, there has to exist some $T^*<T$ such that $X(t)\to\infty$ as $t\to T^*$. Then we would have
\[ \int_{X_0}^\infty \frac{\dd x}{(x\log x)^{1+s}} \leq \int_0^{T^*} f(\tau)\dd\tau. \]
Therefore the sufficient condition to have a global solution at the interval $[0,T]$, i. e. for the blow-up to not occur, is 
\begin{align}
\label{cnd:base}
    \int_0^T f(\tau)d\tau < \int_{X_0}^\infty \frac{dx}{(x\log x)^{1+s}} \esp{for } s=\frac{2}{p-3}.
\end{align}

Since $f(t)=\|\nabla v\|_2^2K_0$, and \eqref{energy:balance} provides  $ \displaystyle\int_0^T\|\nabla v\|_2^2 d\tau \leq \|\sqrt{\rho_0}v_0\|_2^2$, we can replace this condition by
\begin{equation}\label{new_condition} K_0\|\sqrt{\rho_0}v_0\|_2^2 < \int_{X_0}^\infty \frac{dx}{(x\log x)^{1+s}}\esp{for } s=\frac{2}{p-3}. \end{equation}
For a fixed $s$, the integral on the right- hand side of (\ref{new_condition}) is finite, therefore for large enough data the blow-up may occur in finite time. On the other hand, the definition of $K_0$, condition (\ref{new_initial}) and the fact that $s\sim\frac{2}{p}$ (when $ p\to \infty$) provide that
\[ \lim_{s\to 0}\frac{1}{|\log s|} K_0\|\sqrt{\ro_0}v_0\|_2^2=0 . \] 
Therefore in order for (\ref{new_condition}) to be satisfied for large $p$, it is enough to show that
\[ \lim_{s\to 0} \frac{1}{|\log s|}\int_{X_0}^\infty \frac{dx}{(x\log x)^{1+s}} \geq l \]
for some  $l>0$. This is however provided by Lemma \ref{lem:basi:lim} below, with $l=1$. In conclusion, condition (\ref{new_condition}) is fulfilled for some small enough $s_0>0$, which, since this condition does not depend on $t$, completes the proof of (\ref{sob:ine:p}).

Now, to prove (\ref{est_nabla2}), we revert back to (\ref{3:est:nabla v:}). Integrating (\ref{3:est:nabla v:}) in time and using (\ref{:est:nabla v:6}), we get
\begin{multline*} \frac{1}{4C_r\|\rho_0\|_p}\int_0^t \|\nabla^2 v\|_r^2 + \frac{1}{2C_r\|\rho_0\|_p}\int_0^t\|\nabla P\|_r^2 \\
\leq \|\nabla v_0\|_2^2 + K_0\|\sqrt{\rho_0}v_0\|_2^2 \left( e+ \normede{\n v(t,\cdot)}^2 \right)^{1+s}\log^{1+s}\left(e+\normede{\n v(t,\cdot)}^2\right). \end{multline*}
Since from (\ref{sob:ine:p}) we have $\|\nabla v\|_{L^\infty(0,T;L^2)}^2\leq c_0$ and $s\sim\frac{2}{p}$, we end up with
\begin{align*}
    \int_0^t\|\nabla^2 v\|_r^2 + \int_0^t\|\nabla P\|_r^2 &\leq C\|\rho_0\|_p\biggl(\|\nabla v_0\|_2^2 + K_0\biggl((c_0+e)\log (e+c_0)\biggr)^{1+\frac{2}{p}}\biggr)\\ 
    &\sim \|\rho_0\|_p^3\log(e+\|\rho_0\|_p), 
\end{align*}
for $\|\rho_0\|_p\to \infty$, which ends the proof of (\ref{est_nabla2}).
\end{proof}
The following result allows us to clarify the condition \eqref{new_condition}:
\begin{lem}
\label{lem:basi:lim}
It holds
    \[ \lim_{s\to 0} \frac{1}{|\log s|} \int_{X_0}^\infty \frac{dx}{(x\log x)^{1+s}} \geq 1. \]
\end{lem}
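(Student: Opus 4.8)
The plan is to read off the asymptotics of the integral as $s\to 0^+$ by turning it into a truncated Gamma-type integral through two successive substitutions. Recall that in the proof of Proposition \ref{pro:sobo:est} one has $X_0=X(0)=e+\int_{\T^2}|\nabla v_0|^2\geq e>1$, so $\log x>0$ on the whole range of integration and all the logarithms below are well defined. First I would substitute $u=\log x$, which gives
\[ \int_{X_0}^\infty \frac{dx}{(x\log x)^{1+s}} = \int_{\log X_0}^\infty \frac{e^{-su}}{u^{1+s}}\,du, \]
and then $t=su$, which yields
\[ \int_{X_0}^\infty \frac{dx}{(x\log x)^{1+s}} = s^s \int_{s\log X_0}^\infty \frac{e^{-t}}{t^{1+s}}\,dt. \]
Since $s^s=e^{s\log s}\to 1$ as $s\to 0^+$, the whole claim reduces to showing that $\frac{1}{|\log s|}\int_{s\log X_0}^\infty t^{-1-s}e^{-t}\,dt\to 1$.

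For the remaining integral I would split it at $t=1$. The tail $\int_1^\infty t^{-1-s}e^{-t}\,dt\leq \int_1^\infty e^{-t}\,dt=e^{-1}$ is $O(1)$. On $[s\log X_0,1]$ I would decompose $e^{-t}=1+(e^{-t}-1)$; the contribution of $e^{-t}-1$ is bounded using $|e^{-t}-1|\leq t$ by $\int_0^1 t^{-s}\,dt=(1-s)^{-1}$, which is again $O(1)$, while the leading term is
\[ \int_{s\log X_0}^1 \frac{dt}{t^{1+s}} = \frac{(s\log X_0)^{-s}-1}{s}. \]
Writing $(s\log X_0)^{-s}=\exp\!\big(s|\log s|-s\log\log X_0\big)$ and using that $s|\log s|\to 0$ and $s\log\log X_0\to 0$, a first-order expansion of the exponential gives $(s\log X_0)^{-s}=1+s|\log s|+O\big(s\log\log X_0\big)+O\big((s\log s)^2\big)$, hence $\frac{(s\log X_0)^{-s}-1}{s}=|\log s|+O(1)$. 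Putting the pieces together, $\int_{s\log X_0}^\infty t^{-1-s}e^{-t}\,dt=|\log s|+O(1)$, so dividing by $|\log s|$ and multiplying by $s^s$ gives $\lim_{s\to 0^+}\frac{1}{|\log s|}\int_{X_0}^\infty (x\log x)^{-1-s}\,dx=1$, which is the asserted inequality (in fact with equality).

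The one point that requires care is that one must \emph{not} estimate $e^{-t}\geq e^{-1}$ on $[s\log X_0,1]$: that would produce only the lower bound $e^{-1}$ and lose the sharp constant $1$. The integral concentrates near $t=0$, where $e^{-t}\approx 1$, and the decomposition $e^{-t}=1+(e^{-t}-1)$ is precisely what isolates this behaviour; the exact constant $1$ comes out of $\int_{s\log X_0}^1 t^{-1-s}\,dt\sim|\log s|$ together with $s^s\to1$. Everything else is a routine bounding of lower-order terms, so this is the only step where one has to be quantitatively precise.
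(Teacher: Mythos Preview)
Your proof is correct and in fact establishes the limit with equality, whereas the paper only proves the lower bound $\geq 1$. The two arguments are genuinely different: you perform the two substitutions $u=\log x$, $t=su$ to reduce the integral to $s^s\int_{s\log X_0}^\infty t^{-1-s}e^{-t}\,dt$, then extract the precise asymptotic $|\log s|+O(1)$ by splitting at $t=1$ and expanding $e^{-t}$ near the origin. The paper instead fixes $x$ and uses convexity of $s\mapsto (x\log x)^{-1-s}$ to write $H_x(s)\geq H_x(0)+sH_x'(0)$, integrates this pointwise lower bound over a finite window $[X_0,A]$, and finally optimizes by choosing $A=e^{1/s}$. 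The authors even remark that a direct computation (essentially what you do) gives the exact asymptotic $-\log s$, but they opted for the convexity argument to showcase the technique. Your route is more explicit and yields sharper information; theirs is more robust in that it avoids exact evaluation of any integral and would adapt to other convex families of integrands.
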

\begin{proof}
For a fixed $x\geq X_0$, let
\[ H_x(s) = \frac{1}{(x\log x)^{1+s}}, \quad s\in [0,1]. \]

     It is easy to check that $s\mapsto H_x(s)$ is convex and the derivative of $H_x$ satisfies
 \begin{align*}
     H'_x(s)= -\frac{1}{x^{1+s}\log(x)^{s}}- \frac{\log(\log(x))}{x^{1+s}\log(x)^{1+s}}.
 \end{align*}
Using convexity, we have
\[ H_x(s) \geq H_x(0) + sH_x'(0) = \frac{1}{x\log x} - s\left(\frac{1}{x} + \frac{\log\log x}{x\log x}\right). \] 
Next, for $A$ large enough we deduce that
\[\begin{aligned} \int_{X_0}^A H_x(s) \;\dd x &\geq \int_{X_0}^A \frac{1}{x\log x}\;\dd x - s\int_{X_0}^A \frac{1}{x}+\frac{\log\log x}{x\log x} \;\dd x \\
&= \log\log A - \log\log X_0 - s(\log A-\log X_0) - \frac{s}{2}\left((\log\log A)^2-(\log\log X_0)^2\right).
\end{aligned}\]
%
 In particular, choosing $ \log(A) =\frac{1}{s}  $ (that is $A=e^\frac{1}{s}$), we obtain  
 \begin{equation}\label{int_H_x}
    \int^{e^\frac{1}{s}}_{X_0} H_x(s)dx \ge -\log(s)-1-\frac{s}{2}\log^2(s)+C(X_0),
\end{equation}
where $C(X_0)$ is independent on $s$. Dividing both sides by $-\log s$ and passing to the limit with $s\to 0$, we finish the proof of Lemma \ref{lem:basi:lim}.

It’s important to note that a direct computation of the integral from the lemma reveals that the asymptotic behavior of this quantity is $-\log s$ as $s \to 0$. However, we opted for a proof based on the properties of convex functions to highlight the strength and effectiveness of this technique.
\end{proof}

\begin{rmq}
With the help of Lemma \ref{lem:basi:lim}, we can also express the constant $c_0$ in (\ref{sob:ine:p}) in terms of $s_0$: let $s_0$ be such that
\[ \frac{1}{|\log s_0|}\left(1+\frac{s_0}{2}\log^2(s_0) -C(X_0)\right) + \frac{1}{|\log s_0|}K_0\|\sqrt{\rho_0}v_0\|_2^2 <\frac{1}{2}, \]
when $C(X_0)$ is like in (\ref{int_H_x}) (such $s_0$ exists, since $\lim_{s\to 0}\frac{1}{|\log s|}K_0=0$). Assuming that there exists a $t_*$ such that $X(t_*)>e^{\frac{1}{s_0}}$, from (\ref{int_ineq}) we have
\[ \int_{X_0}^{e^{\frac{1}{s_0}}}\frac{dx}{(x\log x)^{1+s_0}} < \int_{X_0}^{X(t_*)}\frac{dx}{(x\log x)^{1+s_0}} \leq \int_0^{t_*}f(\tau)d\tau \leq K_0\|\sqrt{\rho_0}v_0\|_2^2. \]
Using (\ref{int_H_x}) and dividing by $|\log s_0|$, we get
\[ 1 - \frac{1}{|\log s_0|}\left(1+\frac{s}{2}\log^2(s_0) -C(X_0)\right) \leq \frac{1}{|\log s_0|}K_0\|\sqrt{\rho_0}v_0\|_2^2 \]
which leads to a contradiction. In conclusion, $X(t)\leq e^{\frac{1}{s_0}}$ for all $t>0$.
\end{rmq}

   

In the absence of vacuum, Proposition \ref{pro:sobo:est} remains valid, provided that the density also belongs to the space $\mathcal{Y}_0$. More precisely,
\begin{prop}
\label{pro:sobo:est:r0>0}
Let $(\ro, v)$ be a smooth solution to (\ref{INS}) on $ [0,T] $ satisfying \eqref{data:ass:ro>0}. Then the estimates \eqref{sob:ine:p} and \eqref{est_nabla2} hold true for all $t\in [0,T]$, with $c_0$ depending on initial data. 
\end{prop}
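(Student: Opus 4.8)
The plan is to repeat the energy argument of Proposition \ref{pro:sobo:est} verbatim up to inequality \eqref{3:est:nabla v:}, which only uses the momentum equation, Hölder's inequality and maximal regularity for the stationary Stokes system on $\T^2$ — none of these steps sees the vacuum. So we again have
\[ \frac{d}{dt}\intT|\n v|^2+\frac{1}{2}\intT\ro|v_t|^2+ \frac{1}{2C_r\normep{\ro}}\left(\normer{\n^2 v}^2+\normer{\n P}^2 \right)\le  \frac{3}{2}\intT\ro |v\cdot\n v|^2. \]
The only difference is in the estimate of the right-hand side: instead of invoking Desjardins' inequality \eqref{Derjadin:ine} to control $\normede{\sqrt{\ro}|v|^2}$ through $\n v$ and the data, we now exploit the hypothesis $\ro\ge\ro_*>0$, which together with the energy balance \eqref{energy:balance} gives $v\in L^\infty(0,T;L^2)$ with $\|v\|_2^2\le\ro_*^{-1}\|\sqrt{\ro_0}v_0\|_2^2$. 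This is exactly the situation described in the second bullet of the discussion in Section 2: by Hölder's inequality $\intT\ro|v\cdot\n v|^2\le\normes{\sqrt{\ro}v}{2p'}^2\normes{\n v}{q}^2$ for a suitable pair of exponents, then interpolating $\|\n v\|_q$ between $\normede{\n v}$ and $\normes{\n^2 v}{r}$ via the critical Sobolev embedding \eqref{sobolev:ine:criti}, and bounding $\normes{\sqrt{\ro}v}{2p'}$ by $\normep{\ro}^{1/2}\normes{v}{q'}$ and then $\normes{v}{q'}$ by $\normede{v}^{1-\theta}\normede{\n v}^\theta$, one arrives at
\[
\intT \ro |v\cdot\n v|^2\le \frac{\varepsilon}{\normep{\ro}}\normes{\n^2 v}{r}^2+ \varepsilon^{-\frac{2}{p-3}}\normep{\ro}^{\frac{p+1}{p-3}}\normes{v}{2}^{2\frac{p-1}{p(p-3)}}\normede{\n v}^{2(2+\varepsilon_p)},
\]
with $\varepsilon_p\simeq \tfrac1p$ as $p\to\infty$. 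Choosing $\varepsilon$ small enough to absorb the first term into the left-hand side of \eqref{3:est:nabla v:}, and setting $X(t)$ as in \eqref{def:X:K_0}, we obtain the differential inequality $\frac{d}{dt}X\le g(t)X^{1+\varepsilon_p}$ with $g(t)=C\normep{\ro}^{\frac{p+1}{p-3}}\|v\|_2^{2\frac{p-1}{p(p-3)}}\normede{\n v}^2$, and $\int_0^T g\,dt\le C\normep{\ro}^{\frac{p+1}{p-3}}(\ro_*^{-1}\|\sqrt{\ro_0}v_0\|_2^2)^{\frac{p-1}{p(p-3)}}\|\sqrt{\ro_0}v_0\|_2^2$ by \eqref{energy:balance}.

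Solving the Bernoulli-type ODE, the blow-up is avoided on $[0,T]$ provided $\varepsilon_p\,X_0^{\varepsilon_p}\int_0^T g\,dt<1$; since $X_0^{\varepsilon_p}\to 1$ and $\varepsilon_p\to 0$ as $p\to\infty$, this is ensured as soon as $\varepsilon_p\int_0^T g\,dt\to 0$. The decisive point is the growth of $\int_0^T g\,dt$ in $p$: the powers $\frac{p+1}{p-3}$ and $\frac{p-1}{p(p-3)}$ both tend to finite limits ($1$ and $0$ respectively), so $\int_0^T g\,dt\lesssim \normep{\ro}^{1+o(1)}\cdot(\text{data})$, and multiplying by $\varepsilon_p\simeq\frac1p$ gives a quantity comparable to $\frac{1}{p}\normep{\ro}^{1+o(1)}$. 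Here is where the membership $\ro_0\in\mathcal{Y}_0$ enters: by \eqref{def:Yudo:null}, $\frac1p\normep{\ro}=\frac1p\normep{\ro_0}\to 0$, hence $\frac1p\normep{\ro}^{1+o(1)}\to 0$ as well, and the smallness condition holds for all sufficiently large $p$ — this yields \eqref{sob:ine:p} with $c_0$ depending only on the data and on $\ro_*$. Once \eqref{sob:ine:p} is known, \eqref{est_nabla2} follows exactly as in Proposition \ref{pro:sobo:est}: integrate \eqref{3:est:nabla v:} in time, use the just-proved bound on $\|\n v\|_{L^\infty(0,T;L^2)}$ together with $\|v\|_{L^\infty(0,T;L^2)}$ to control the right-hand side, and note $\normep{\ro}=\normep{\ro_0}<\infty$.

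I expect the only genuine obstacle to be the bookkeeping of exponents in the interpolation chain — verifying that the exponent of $\normede{\n v}$ is exactly $2+\varepsilon_p$ with $\varepsilon_p\simeq\frac1p$, and that the remaining powers of $\normep{\ro}$ and $\|v\|_2$ stay bounded as $p\to\infty$ so that the Yudovich-type smallness $\frac1p\normep{\ro_0}\to 0$ is strong enough to close the argument. Everything else (the energy identity, maximal regularity, the Bernoulli ODE, and the passage $p\to\infty$) is either identical to Proposition \ref{pro:sobo:est} or completely routine.
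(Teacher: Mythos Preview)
Your proposal is correct and follows essentially the same route as the paper's own proof: starting from \eqref{3:est:nabla v:}, using the lower bound $\ro\ge\ro_*$ to secure $v\in L^\infty(0,T;L^2)$, then H\"older plus Gagliardo--Nirenberg/Sobolev interpolation to obtain the inequality $\intT\ro|v\cdot\n v|^2\le \varepsilon\normep{\ro}^{-1}\normer{\n^2 v}^2+C\varepsilon^{-2/(p-3)}\normep{\ro}^{(p+1)/(p-3)}\|v\|_2^{2(p-1)/(p(p-3))}\normede{\n v}^{2(2+\varepsilon_p)}$, the Bernoulli ODE $X'\le g\,X^{1+\varepsilon_p}$, and finally the $\mathcal{Y}_0$ hypothesis $\frac1p\normep{\ro_0}\to 0$ to fulfil the no-blow-up condition for large $p$. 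The paper's interpolation is organised slightly differently (it splits as $\normep{\ro}\|v\|_{2p}^2\|\n v\|_{2p/(p-2)}^2$ rather than through $\sqrt{\ro}v$), but the resulting exponents and the closing argument are identical.
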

\begin{proof}
   The main ingredient of the proof relies on the fact that $v\in L^\infty(\mathbb{R}_+, \T^2)$.\\
   In fact, since  $\ro$ is far away from vacuum, the identity \eqref{energy:balance} provides  
  \begin{align}
  \label{normde(v)=f(normde(ro:v))}
      \normede{v}\le \frac{1}{\sqrt{\ro_*}}\normede{\sqrt{\ro}v}\le  \frac{1}{\sqrt{\ro_*}}\normede{\sqrt{\ro_0}v_0}.
  \end{align}
  Starting from \eqref{3:est:nabla v:} and using Hölder inequality, we have
    \begin{align*}
        \intT \ro |v\cdot\n v|^2\le \normep{\ro}\normes{v}{2p}^2\normes{\n v}{\frac{2p}{p-2}}^2
    \end{align*}
    from which and Sobolev inequality we deduce
      \begin{align*}
        \intT \ro |v\cdot\n v|^2\le \normep{\ro}\normes{v}{2}^{\frac{2}{p}}\normede{\n v}^{2(1-\frac{1}{p})}\normede{\n v}^{2\frac{p-3}{p-1}} \normes{\n^2 v}{r}^{\frac{4}{p-1}}.
    \end{align*}
    Therefore, Young inequality ensures that, for all $\varepsilon>0$ 
     \begin{align*}
        \intT \ro |v\cdot\n v|^2\le  \frac{\varepsilon}{\normep{\ro}}\normes{\n^2 v}{r}^2+ \varepsilon^{-\frac{2}{p-3}} 
        \normep{\ro}^{\frac{p+1}{p-3}}   \normes{v}{2}^{2\frac{p-1}{p(p-3)}}\normede{\n v}^{2(2+\varepsilon_p)} ,
    \end{align*}
  where   
$\varepsilon_p:=\frac{p+1}{p(p-3)}$  \\
  Combining the previous inequality and \eqref{3:est:nabla v:}, choosing $\varepsilon$ small enough and keeping the notation in the proof of Proposition \ref{pro:sobo:est} we end up with
    \begin{align*}
        \frac{d}{dt} X(t)\le g(t)X^{1+\varepsilon_p} \esp{with } g(t):=C  \normep{\ro}^{\frac{p+1}{p-3}}   \normes{v}{2}^{2\frac{p-1}{p(p-3)}}\normede{\n v}^2.
    \end{align*}
The sufficient condition for getting the existence of $X(t)$ for all $t\in \mathbb{R}^+$ is to find $p$ such that
\begin{align}
\label{cnd:X:g}
    \int^t_0 g(\tau)d\tau < \int^{\infty}_{X_0} x^{-1-\varepsilon_p}dx=\frac{1}{\varepsilon_p}X_0^{-\varepsilon_p}.
\end{align}
Since $ \int^t_0 \normede{\n v}^2\le \normede{\sqrt{\ro_0}v_0}^2 $, then from \eqref{normde(v)=f(normde(ro:v))} we have  
 $$  \int^t_0 g(\tau)d\tau\le C \ro_*^{-\frac{p-1}{p(p-3)}}  \normep{\ro_0}^{\frac{p+1}{p-3}}   \normes{\sqrt{\ro_0}v_0}{2}^{2(1+\frac{p-1}{p(p-3)})}  .$$
Therefore  taking advantage of the fact that $\ro\in \mathcal{Y}_0$ , there exists $p_0$ such that  \eqref{cnd:X:g} holds. This completes the proof of the Proposition \ref{pro:sobo:est:r0>0}.
%
\end{proof}

\section{Weighted estimates}
\label{sec:weighted:estimates}
Our goal in this section is to prove that $\n v$ belongs to $L^1(0,T;L^\infty)$, in terms of the data and the time $T$. To achieve it, let us first prove we have a bound on $\sqrt{\ro t}v_t$ and $\sqrt{t}\n v_t$ in $ L^\infty(0,T;L^2) $ and $L^2(0,T;L^2)$ respectively. 

The below estimation is based on the scheme form \cite{DanMu}. We put here all the details since the lack of boundedness of the density changes the key estimates in a few important places. We perform the calculations assuming that $\ro_0\in L^p(\Omega)$ with $p>3$ fixed. In particular, the obtained estimates will be still valid provided that $\ro_0\in\mathcal{L}$ (or $ \ro_0\in \mathcal{Y}_0 $, in the context of Theorem \ref{thm:Lp:ro>0}).

\begin{prop}(Time derivative estimates).
\label{propo:TIme:derivative}
Let $(\ro, v)$ be a smooth enough solution to system (\ref{INS}) on $[0,T^*)\times \T^2$ satisfying  \eqref{sob:ine:p} and \eqref{est_nabla2}. Then for all $T\in [0,T^*) $ , it holds

\begin{align}
\label{ine:time:deriv}
    \sup_{t\in [0,T]} \normede{\sqrt{t\ro(t)} v_t(t)}^2+\int^T_0\normede{\sqrt{t}\n v_t(t)}^2dt\le C_{0,T},
\end{align}
\begin{align}
\label{ine:time:deriv:-ro}
   \lVert \sqrt{t} v_t\rVert_{L^2(0,T;L^q)} \le C_{0,T,q}, \esp{for all} q<\infty
\end{align}
    where $C_{0,T}$ is a constant depending only on $T$, $p$ and the norms $\normep{\ro_0}$, $\normede{\sqrt{\ro_0}v_0}$, $\normede{\n v_0}$, while $C_{0,T,q}>0$ depends only on $T$, $p,q$ and the norms $\normep{\ro_0}$, $\normede{\sqrt{\ro_0}v_0}$, $\normede{\n v_0}$.
\end{prop}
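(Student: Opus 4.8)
The plan is to differentiate the momentum equation in time, test against $tv_t$, and close the resulting inequality using the Sobolev-type bounds already secured in Proposition \ref{pro:sobo:est} together with the time-weighting to absorb the singular behaviour at $t=0$. Concretely, I would start from
\[
\ro v_{tt} + \ro v\cdot\n v_t - \mu\Delta v_t + \n P_t = -\ro_t v_t - \ro_t v\cdot\n v - \ro v_t\cdot\n v,
\]
obtained by differentiating (\ref{INS})$_2$, and use the mass equation $\ro_t = -\dvg(\ro v) = -v\cdot\n\ro$ (so $\ro_t v_t + \ro v\cdot\n v_t = \partial_t(\ro v_t) + \dvg(\ro v\, v_t) \cdot$ in the appropriate weak sense) to rewrite the convective terms in a form where testing by $v_t$ produces a clean kinetic-energy term $\tfrac12\tfrac{d}{dt}\intT\ro|v_t|^2$ plus the remainder $-\intT(\ro_t v\cdot\n v + \ro v_t\cdot\n v)\cdot v_t$. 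Multiplying through by $t$ and integrating by parts in time converts $t\,\tfrac{d}{dt}\intT\ro|v_t|^2$ into $\tfrac{d}{dt}\big(t\intT\ro|v_t|^2\big) - \intT\ro|v_t|^2$, where the last term is integrable over $(0,T)$ precisely by \eqref{sob:ine:p}. After using the divergence-free condition to kill $\n P_t$, the viscous term gives $\mu t\intT|\n v_t|^2$ on the left.

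The core of the work is then estimating the two nonlinear remainder terms $t\intT\ro_t\,(v\cdot\n v)\cdot v_t$ and $t\intT\ro (v_t\cdot\n v)\cdot v_t$. For the second one I would write $\intT\ro|v_t||\n v||v_t| \le \normede{\sqrt\ro v_t}\,\norme{\sqrt\ro v_t}_{L^{q}}\norme{\n v}_{L^{q'}}$ with a Hölder triple tuned so that the $\sqrt\ro v_t$ factor can be interpolated between $\normede{\sqrt\ro v_t}$ and $\normede{\n v_t}$ (Gagliardo–Nirenberg plus the weighted Poincaré inequality \eqref{Poincare:l:h}), picking up $\norme{\ro}_p$ to accommodate the unbounded density as in Proposition \ref{pro:sobo:est}; the $\normede{\n v_t}^2$ piece is absorbed into the left-hand side and what remains is $g(t)\, t\intT\ro|v_t|^2$ with $g\in L^1$. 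The first term is handled by substituting $\ro_t = -\dvg(\ro v)$, integrating by parts to move the derivative onto $(v\cdot\n v)\cdot v_t$, and then controlling $\n v$ in $L^2$, $\n^2 v$ in $L^r$ (from \eqref{est_nabla2}), and $v$ in all $L^q$ (Remark \ref{rmq:conse:Poi:Sobo:reg}), together with one more interpolation of $\sqrt t v_t$; a short computation shows the total contribution is $\le h(t)\,t\intT\ro|v_t|^2 + (\text{absorbable}) + (\text{$L^1$ in time})$ with $h\in L^1$. Collecting everything yields a differential inequality of Gronwall type $\tfrac{d}{dt}Y(t) \le (g+h)(t)Y(t) + F(t)$ with $Y(t) = t\intT\ro|v_t|^2 + \mu\int_0^t\tau\intT|\n v_\tau|^2$ and $g+h, F\in L^1(0,T)$, which gives \eqref{ine:time:deriv} directly. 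The initial condition $Y(0)=0$ is where the time weight earns its keep, since $\sqrt\ro v_t|_{t=0}$ is not under control.

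Finally, \eqref{ine:time:deriv:-ro} follows from \eqref{ine:time:deriv} by interpolation: $\sqrt t\, v_t \in L^\infty(0,T;L^2)$-in-the-weighted-sense combined with $\sqrt t\,\n v_t \in L^2(0,T;L^2)$ gives, via Gagliardo–Nirenberg $\norme{w}_{L^q} \le C\norme{w}_{L^2}^{2/q}\norme{\n w}_{L^2}^{1-2/q}$ applied to $w=\sqrt t\,v_t$ together with the weighted Poincaré inequality \eqref{Poincare:l:h} to control $\norme{\sqrt t\,v_t}_{L^2}$ by $\normede{\sqrt{t\ro}v_t}$ plus a lower-order term, that $\sqrt t\,v_t \in L^{4/(1-2/q)}_{\mathrm{loc}} \hookrightarrow L^2(0,T;L^q)$ for every $q<\infty$; the exponent $4/(1-2/q) \ge 2$ is always admissible. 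The main obstacle I anticipate is bookkeeping the Hölder exponents and the powers of $\norme{\ro}_p$ so that, as in the proof of Proposition \ref{pro:sobo:est}, the constants stay finite for fixed $p>3$ (and, if one wants, tracking their growth in $p$); the structure of the argument is otherwise the standard second-energy estimate for the inhomogeneous Navier–Stokes system as in \cite{DanMu}, with the density's lack of boundedness forcing the use of $L^r$ rather than $L^2$ maximal regularity throughout.
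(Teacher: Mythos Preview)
Your plan is essentially the paper's proof: differentiate the momentum equation in time, test by $tv_t$, use the mass equation to convert each $\ro_t$ into $-\dvg(\ro v)$ and integrate by parts, then bound the resulting trilinear terms by H\"older, interpolation, the Sobolev estimate $\|\n v\|_{2p}\le C\|\n^2 v\|_r$, and the weighted Poincar\'e inequality \eqref{Poincare:l:h}, closing via Gronwall; the second estimate \eqref{ine:time:deriv:-ro} follows from \eqref{ine:time:deriv} and \eqref{Poincare:l:h} exactly as you say. Two small slips to fix: in your divergence identity you mean $\ro v_{tt}+\ro v\cdot\n v_t=\partial_t(\ro v_t)+\dvg(\ro v\,v_t)$ (not $\ro_t v_t$ on the left), and your list of remainder terms omits $-\intT\ro_t|v_t|^2$, which the paper treats as part of $J_2$ --- it is handled by the very same integration by parts you describe for the $\ro_t v\cdot\n v$ term, so this does not affect the argument.
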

\begin{proof}
    
At first, differentiating $(\ref{INS})_2$, yields
\begin{equation}
    \label{diff:time:es:1}
    \ro v_{tt} +\ro_t v_t+\ro_t v\cdot \n v+\ro v_t\cdot\n v+\ro v_t\cdot\n v-\Delta v_t +\n P_t=0.
\end{equation}
Next, taking the scalar product with $tv_t$, we get
\begin{equation}
    \label{diff:time:es:2}
    \frac{1}{2}\frac{d}{dt} \intT\ro t|v_t|^2+\intT t|\n v_t|^2=\sum_{i=1}^4 J_i
\end{equation}
with
\begin{align}
    \label{J_1}
    &J_1=\frac{1}{2}\intT\ro |v_t|^2,\\
    \label{J_2}
    & J_2=-\intT\left(t\ro_t |v_t|^2+(\sqrt{t}\ro v\cdot\n v_t)\cdot(\sqrt{t}v_t)\right),\\
    \label{J_3}
    &J_3=-\intT\left(\sqrt{t}\ro_tv\cdot\n v\right)\cdot\left(\sqrt{t}v_t\right),\\
    \label{J_4}
    &J_4=-\intT \left(\sqrt{t}\ro v_t\cdot\n v\right)\cdot\left(\sqrt{t}v_t\right).
\end{align}
Now, we are going to show that for all $t\in [0,T]$ :
\begin{equation}
     \label{diff:time:es:3}
     \sum_{i=1}^4 J_i \le \frac{1}{2}\normede{\sqrt{t}\n v_t}^2+C(1+\normede{\sqrt{t\ro}v_t}^2)h(t).
\end{equation}
for some $h\in L^1(0,T)$, the norm of which may depend on $T$ and the initial data.

Indeed, having \eqref{diff:time:es:3} at hand will enable us to get \eqref{ine:time:deriv} by means of Gronwall's Lemma, since the first term in the right-hand side of \eqref{diff:time:es:3} may be absorbed by the left-hand of \eqref{diff:time:es:2}.

 Obviously, according to  estimate \eqref{sob:ine:p} we have $ J_1\in L^1(0,T)$, and its norm depends only on the initial data .\\

 To handle $J_2$, we use the mass equation $(\ref{INS})_1$ in the first term, after performing an integration by parts. One has
\begin{align*}
     J_2&=-\intT\left(t\ro_t |v_t|^2+(\sqrt{t}\ro v\cdot\n v_t)\cdot(\sqrt{t}v_t)\right)\\
    &= \intT  \left(t\ro v \n|v_t|^2-(\sqrt{t}\ro v\cdot\n v_t)\cdot(\sqrt{t}v_t)\right)\\
    &\le 3\intT \ro |v||\sqrt{t}\n v_t||\sqrt{t}v_t|.
\end{align*}
Using Hölder and interpolation inequalities, we get
\begin{align*}
    J_2&\le 3\normede{\sqrt{t}\n v_t}\normes{\sqrt{t\ro}v_t}{q_1}\normes{\sqrt{\ro}v}{q_2}\\
    &\le 3\normede{\sqrt{t}\n v_t}\normede{\sqrt{t\ro}v_t}^{1-\theta}\normes{\sqrt{t\ro}v_t}{q}^{\theta} \normes{\sqrt{\ro}v}{q_2}
\end{align*}
with
\begin{equation}
    \label{alpha=p2:q2:qq2}
  \frac{1}{2}=  \frac{1}{q_1}+\frac{1}{q_2},\esp{} q_2<2p,\esp{} q_1<q<2p,  \esp{} \frac{1}{q_1}=\frac{1-\theta}{2}+\frac{\theta}{q}, \text{   and  } \theta\in (0,1)
\end{equation}
For instance we can take $q_2=q=6, \theta=\frac{1}{2}$.\\
Using Hölder's inequality on $\normes{\sqrt{t\ro}v_t}{q} $ gives
\[     \normes{\sqrt{t\ro}v_t}{q} \le \normes{\sqrt{\ro}}{2p}\normes{\sqrt{t}v_t}{m},   \]
with $\frac{1}{m}:=\frac{1}{q}-\frac{1}{2p}$. In the end, we arrive at
\begin{align*}
    J_2&\le 3 \normes{\ro}{p}^{\frac{\theta}{2}} \normede{\sqrt{t}\n v_t}\normede{\sqrt{t\ro}v_t}^{1-\theta}\normes{\sqrt{t}v_t}{m}^\theta\normes{\sqrt{\ro}v}{q_2}
\end{align*}
which becomes, using \eqref{Poincare:l:h} and Young inequality 
\begin{align*}
    J_2&\le C \normes{\ro}{p}^{\frac{\theta}{2}}  \normede{\sqrt{t}\n v_t}\normede{\sqrt{t\ro}v_t} \normes{\sqrt{\ro}v}{q_2}\\
    &+C \normes{\ro}{p}^{\frac{\theta}{2}}  \normede{\sqrt{t}\n v_t}^{1+\theta}\normede{\sqrt{t\ro}v_t}^{1-\theta} \normes{\sqrt{\ro}v}{q_2}\\
    &\le \frac{1}{8} \normede{\sqrt{t}\n v_t}^{2}+C \normes{\ro}{p}^{\theta} \normede{\sqrt{t\ro}v_t}^2 \normes{\sqrt{\ro}v}{q_2}^2\\
    & +C\normes{\ro}{p}^{\frac{\theta}{1-\theta}}  \normede{\sqrt{t\ro}v_t}^2 \normes{\sqrt{\ro}v}{q_2}^{\frac{2}{1-\theta}}.
\end{align*}
Finally, $J_2$ may be bounded as follows 
\begin{align}
    \label{bound:J2}
    J_2\le  \frac{1}{8} \normede{\sqrt{t}\n v_t}^{2} +h_2(t)\normede{\sqrt{t\ro}v_t}^2
\end{align}
 where
 \[h_2= C \left(\normes{\ro}{p}^{\theta}  \normes{\sqrt{\ro}v}{q_2}^2 +\normes{\ro}{p}^{\frac{\theta}{1-\theta}} \normes{\sqrt{\ro}v}{q_2}^{\frac{2}{1-\theta}} \right).\]
Notice that, since \eqref{sob:ine:p} and \eqref{est_nabla2} are assumed to be satisfied ,  Remark \ref{rmq:conse:Poi:Sobo:reg} and $q_2< 2p$ ensure that $ \normes{\sqrt{\ro}v}{q_2}\in L^\infty(0,T)$. Hence $h_2\in L^1(0,T)$ and its norm depends on $T$ and the initial data.

Next, from the mass equation, we get
\begin{align*}
    J_3 &=-\intT\left(\sqrt{t}\ro_tv\cdot\n v\right)\cdot\left(\sqrt{t}v_t\right)
    =-\intT t\ro v\cdot \n \left( (v\cdot\n v)\cdot v_t\right),
\end{align*}
where we have performed an integration by parts in the second equality.\\
Hence,
\begin{align*}
    J_3\le \underbrace{\intT t\ro |v||v_t||\n v|^2}_{J_{31}}+\underbrace{\intT t\ro |v|^2|v_t||\n^2 v|}_{J_{32}}+ \underbrace{\intT t\ro |v|^2|\n v_t||\n v|}_{J_{33}}.
\end{align*}
To handle $J_{31}$, we use Hölder inequality and \eqref{sobolev:ine:criti}. One has
\begin{align*}
   J_{31} &=  \intT t\ro |v||v_t||\n v|^2\\
   & \le \normede{\sqrt{t\ro}v_t}\normes{\n v}{2p}^2\normes{\sqrt{t\ro}v}{q} \\
   &\le C(1+\normede{\sqrt{t\ro}v_t}^2)\normes{\n^2 v}{r}^2\normes{\sqrt{t\ro}v}{q}
\end{align*}
with 
   $\frac{1}{q}:= \frac{1}{2}- \frac{1}{p} > \frac{1}{2p}.$
Then, Remark \ref{rmq:conse:Poi:Sobo:reg} ensures that  $\normes{\sqrt{t\ro}v}{q} \in L^\infty(0,T)$ and thus, we get 
\begin{align}
    \label{bound:J31}
    J_{31}\le  h_{31}(t)(1+\normede{\sqrt{t\ro}v_t}^2)
\end{align}
with 
\[ h_{31}=\normes{\n^2 v}{r}^2\normes{\sqrt{t\ro}v}{\bar{p}}\in L^1(0,T), \]
and a norm depending on $T$ and initial data.

To bound $ J_{32}$, we take advantage on Hölder inequality to get
\begin{align*}
    J_{32}&= \intT t\ro |v|^2|v_t||\n^2 v|\\
    &\le \normes{\sqrt{\ro}}{2p}\normes{\n^2 v}{r}\normes{\sqrt{t}v_t}{r_1}\normes{\sqrt{t\ro}|v|^2}{r_2},
\end{align*}
where $r$ has been defined in \eqref{2:est:nabla v:} and 
\[ \frac{1}{2p}+\frac{1}{r}+\frac{1}{r_1}+\frac{1}{r_2}=1. \]
Note that for $p>3$, if $r_1$ is sufficiently large we can pick $r_2<2p$.
Applying \eqref{Poincare:l:h} to $ \normes{\sqrt{t}v_t}{r_1} $, we  find constant $C>0$ depending on the data and $p $ such that
\begin{align*}
   J_{32} \le & C \normes{\ro}{p}^\frac{1}{2} \normes{\n^2 v}{r}\left(\normes{\sqrt{t\ro}v_t}{2}+\normede{\sqrt{t}\n v_t}\right)\normes{\sqrt{t\ro}|v|^2}{r_2}\\ 
   \le & \frac{1}{16}\normede{\sqrt{t }\n v_t}^2+ C\normes{\ro}{p}\normes{\n^2 v}{r}^{2} \normes{\sqrt{t\ro}|v|^2}{r_2}^{2}\\
   &+ C\normes{\ro}{p}^\frac{1}{2}  \normes{\n^2 v}{r} \normede{\sqrt{t\ro}v_t}\normes{\sqrt{t\ro}|v|^2}{r_2}.
\end{align*}
Using the inequality $  \normede{\sqrt{t\ro}v_t} \le C(1+\normede{\sqrt{t\ro}v_t}^2) $, we deduce that 
\begin{equation}
    \label{bond:J32}
     J_{32}\le \frac{1}{16}\normede{\sqrt{t }\n v_t}^2+ (1+\normede{\sqrt{t\ro}v_t}^2)h_{32},
\end{equation}
where
\begin{equation}
    \label{def=h32}
    h_{32}=C\normes{\ro}{p}\normes{\n^2 v}{r}^{2} \normes{\sqrt{t\ro}|v|^2}{r_2}^{2}+ C\normes{\ro}{p}^\frac{1}{2}  \normes{\n^2 v}{r} \normes{\sqrt{t\ro}|v|^2}{r_2}.
\end{equation}
 From  \eqref{est_nabla2}, we have $ \normes{\n^2v}{r}^2 \in L^1(0,T) $ and from Remark  \ref{rmq:conse:Poi:Sobo:reg}, we know that $ \normes{\sqrt{t\ro}|v|^2}{r_2} \in L^\infty(0,T) $. In consequence, $h_{32} \in L^1(0,T)$ and its norm depends  on $T$ and initial data.   

For $J_{33}$, we have
\begin{align*}
   J_{33} &=\intT t\ro |v|^2|\n v_t||\n v|\\
   &\le \sqrt{T}\normede{\sqrt{t}\n v_t}\normes{\sqrt{\ro}}{2p}\normes{\n v}{2p}\normes{\sqrt{\ro}|v|^2}{p^*}\\
   &\le \frac{1}{16}\normede{\sqrt{t}\n v_t}^2+ C T\normes{\ro}{p}\normes{\n^2 v}{r}^2\normes{\sqrt{\ro}|v|^2}{p^*}^2.
\end{align*}
with (since $p>3$)
\begin{align}
    \label{def:q:est:t}
   \frac{1}{p^*}:= \frac{1}{2}-\frac{1}{p} > \frac{1}{2p} .
\end{align}
Notice that  $\normes{\ro}{p}\in L^1(0,T) $ and according to  \eqref{est_nabla2}, $\normes{\n^2 v}{r}^2$ belongs to $L^1(0,T)$ as well.
 Moreover, as $p^*<2p$, it follows from Remark \ref{rmq:conse:Poi:Sobo:reg} that $\normes{\sqrt{\ro}|v|^2}{p^*}^2\in L^\infty(0,T)$. In consequence, we have

\begin{equation}
    \label{bond:J33}
     J_{33}\le \frac{1}{16}\normede{\sqrt{t }\n v_t}^2+ (1+\normede{\sqrt{t\ro}v_t}^2)h_{33}
\end{equation}
and $h_{33}:= C T\normes{\ro}{p}\normes{\n^2 v}{r}^2\normes{\sqrt{\ro}|v|^2}{p^*}^2 \in L^1(0,T)$ and its norm depends  on $T$ and initial data.

To bound $J_4$, we may write
\begin{align*}
    J_4=& -\intT \left(\sqrt{t}\ro v_t\cdot\n v\right)\cdot\sqrt{t}v_t \\
    &\le \normede{\sqrt{t\ro}v_t}\normes{\sqrt{\ro}}{2p}\normes{\sqrt{t}v_t}{p^*}\normes{\n v}{2p}
\end{align*}
with $\frac{1}{p^*}=\frac{1}{2}-\frac{1}{p}$.\\
Thanks to \eqref{Poincare:l:h}, we are able to write
\begin{align*}
    J_4 &\le C\normede{\sqrt{t\ro}v_t}\normes{\sqrt{\ro}}{2p}\left( \normede{\sqrt{t\ro}v_t}+ \normede{\sqrt{t}\n v_t} \right)\normes{\n v}{2p}\\
    &\le  C\normes{\sqrt{\ro}}{2p}\normede{\sqrt{t\ro}v_t}^2\normes{\n v}{2p}   + C\normes{\sqrt{\ro}}{2p}\normede{\sqrt{t\ro}v_t}\normede{\sqrt{t}\n v_t}\normes{\n v}{2p}\\
    &\le \frac{1}{8}\normede{\sqrt{t}\n v_t}^2+C \left(  \normes{\sqrt{\ro}}{2p}\normes{\n v}{2p}+  \normes{\sqrt{\ro}}{2p}\normes{\n v}{2p}^2\right)\normede{\sqrt{t\ro}v_t}^2.
\end{align*}
Using again that $ \n^2 v \in L^2(0,T;L^r) $ and (\ref{sobolev:ine:criti}), we obtain
\begin{equation}
    \label{bound:J4}
    J_4\le \frac{1}{8}\normede{\sqrt{t}\n v_t}^2+ h_4\normede{\sqrt{t\ro}v_t}^2,
\end{equation}
where 
\[ h_4:=C \left( \normes{\sqrt{\ro}}{2p}\normes{\n v}{2p}+  \normes{\sqrt{\ro}}{2p}\normes{\n v}{2p}^2\right) \in L^1(0,T) \]
and its norm depends  on $T$ and initial data. This completes the proof of inequality \eqref{diff:time:es:3}.  \\
Substituting  inequality  \eqref{diff:time:es:3} into identity \eqref{diff:time:es:2} and taking advantage on Gronwall's Lemma ensures inequality  \eqref{ine:time:deriv}.

To prove \eqref{ine:time:deriv:-ro}, we combine the inequality \eqref{ine:time:deriv} and the Poincaré inequality \eqref{Poincare:l:h}.
This finishes the proof of Proposition \ref{propo:TIme:derivative}.
\end{proof}
As a consequence we have the following results, the proof of which can be found in \cite{DanMUch13}[Lemma 3.4]. Since the proof does not depend on the $L^\infty$ bound on the density, it can be repeated directly as in \cite{DanMUch13}, hence we skip it.
\begin{lem}
    \label{lem:time:regular}
    Let $q\in [1,\infty)$ and $\alpha\in(0,1/2)$. Then $v\in H^{\frac{1}{2}-\alpha}(0,T;L^q)$ and the following estimate holds:
    \begin{align*}
        \lVert v\rVert_{H^{\frac{1}{2}-\alpha}(0,T:L^q)}^2\le  \lVert v \rVert_{L^2(0,T:L^q)}^2+ C_{\alpha,T}  \lVert\sqrt{t} v_t\rVert_{L^2(0,T:L^q)}^2,
    \end{align*}
    where $C_{\alpha,T}$ depending only on $\alpha$ and on $T$.
\end{lem}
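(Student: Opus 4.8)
The plan is to ignore the PDE entirely and prove the inequality as a purely functional–analytic statement: for any Banach–space–valued $w$ with $w\in L^2(0,T;L^q)$ and $\sqrt t\,w_t\in L^2(0,T;L^q)$ one has $[w]_{H^{1/2-\alpha}(0,T;L^q)}\le C_{\alpha,T}\,\|\sqrt t\,w_t\|_{L^2(0,T;L^q)}$, where $[\,\cdot\,]$ denotes the Sobolev--Slobodeckij seminorm of the Bochner space; adding $\|w\|_{L^2(0,T;L^q)}^2$ to both sides and taking $w=v$ (using \eqref{ine:time:deriv:-ro} and Remark~\ref{rmq:conse:Poi:Sobo:reg}) then gives the claim. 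Writing $s=\tfrac12-\alpha$ and using the change of variables $h=|t-\tau|$, I will start from the identity
\[
[w]_{H^s(0,T;L^q)}^2=\int_0^T\!\!\int_0^T\frac{\|w(t)-w(\tau)\|_{L^q}^2}{|t-\tau|^{1+2s}}\,dt\,d\tau=2\int_0^T h^{-1-2s}\Big(\int_0^{T-h}\|w(t+h)-w(t)\|_{L^q}^2\,dt\Big)\,dh,
\]
so the whole matter reduces to estimating the $L^2$-in-time modulus of continuity of $w$.

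First I would use that $\sqrt t\,w_t\in L^2(0,T;L^q)$ forces $w_t\in L^1_{loc}((0,T);L^q)$, hence $w(t+h)-w(t)=\int_t^{t+h}w_t(\sigma)\,d\sigma$ for a.e. $t$; Minkowski's integral inequality in $L^q$ followed by Cauchy--Schwarz applied to the splitting $w_t=\sigma^{-1/2}\cdot\sigma^{1/2}w_t$ yields
\[
\|w(t+h)-w(t)\|_{L^q}^2\le\Big(\int_t^{t+h}\tfrac{d\sigma}{\sigma}\Big)\Big(\int_t^{t+h}\sigma\,\|w_t(\sigma)\|_{L^q}^2\,d\sigma\Big)=\log\!\Big(1+\tfrac ht\Big)\int_t^{t+h}G(\sigma)\,d\sigma,
\]
with $G(\sigma):=\|\sqrt\sigma\,w_t(\sigma)\|_{L^q}^2\in L^1(0,T)$ and $\|G\|_{L^1(0,T)}=\|\sqrt t\,w_t\|_{L^2(0,T;L^q)}^2$. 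The point is to keep the logarithmic weight: it exactly compensates the non-integrable behaviour of $\sigma^{-1}$ near $0$, whereas the cruder bound $\|w(t+h)-w(t)\|^2\le 2\|w(t+h)\|^2+2\|w(t)\|^2$ would cost an uncontrolled weight $\sigma^{-2s}\|w(\sigma)\|_{L^q}^2$.

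Next I would split the $t$-integral at $t=h$. On $t\in(0,h)$ I keep the logarithm and use, after the scaling $t=hu$, that $\int_0^1\log(1+1/u)\,du<\infty$, obtaining $\int_0^{\min(h,T-h)}\|w(t+h)-w(t)\|_{L^q}^2\,dt\lesssim h\int_0^{\min(2h,T)}G(\sigma)\,d\sigma$; on $t\in(h,T-h)$ I use $\log(1+h/t)\le h/t$, obtaining $\int_h^{T-h}\|w(t+h)-w(t)\|_{L^q}^2\,dt\le h\int_h^{T}t^{-1}\big(\int_t^{\min(t+h,T)}G(\sigma)\,d\sigma\big)\,dt$. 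Plugging these into the identity above, multiplying by $h^{-1-2s}$ and invoking Fubini, the inner integrals are explicit: the first piece gives $\int_0^{T}G(\sigma)\big(\int_{\sigma/2}^{T}h^{-2s}\,dh\big)\,d\sigma$, and the second, after scaling $h=\sigma u$, gives $c(\alpha)\int_0^{T}G(\sigma)\,\sigma^{1-2s}\,d\sigma$ with $c(\alpha)<\infty$. Since $1-2s=2\alpha>0$, both $\int_{\sigma/2}^T h^{-2s}\,dh$ and $\sigma^{1-2s}$ are bounded on $(0,T)$ by a constant $C_{\alpha,T}$, so both pieces are $\le C_{\alpha,T}\int_0^T G=C_{\alpha,T}\|\sqrt t\,w_t\|_{L^2(0,T;L^q)}^2$. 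This is the only step that uses $\alpha>0$ (equivalently $s<\tfrac12$): at $s=\tfrac12$ the $h$-integral $\int_{\sigma/2}^T h^{-1}\,dh$ diverges.

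The main (and essentially only) obstacle is the bookkeeping of the iterated integrals near $t=0$, $h=0$ and $h=T$, and in particular not losing the cancellation in $w(t+h)-w(t)$ nor the $\log$ gain. Near $h=T$ one reads $\int_0^{2h}$ as $\int_0^{\min(2h,T)}$ (equivalently, one extends $w$ by $w(T)$ beyond $T$), which adds at most a harmless term $\lesssim_{\alpha,T}\|w\|_{L^2(0,T;L^q)}^2$. Since each step works for arbitrary $q\in[1,\infty)$ and never invokes an $L^\infty$ bound on the density, the proof is the one of \cite{DanMUch13}[Lemma~3.4], which is why the text only refers to it.
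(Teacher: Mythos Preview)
Your argument is correct and is precisely the approach the paper intends: the paper does not give its own proof but simply refers to \cite{DanMUch13}[Lemma~3.4], noting that nothing in that argument uses an $L^\infty$ bound on the density, and your sketch reproduces that standard Sobolev--Slobodeckij computation (fundamental theorem of calculus, Cauchy--Schwarz with the weight $\sigma^{-1/2}\cdot\sigma^{1/2}$, splitting at $t=h$, and Fubini). You even identify this at the end of your proposal, so there is nothing to add.
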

\subsection{Shift of Integrability}
In the present section, our goal is to exploit the regularity of $ \n^2 v$ and $\n P$ that we have proved so far, in order to bound $\n v$ in $L^1(0,T;L^\infty)$. 
\begin{lem}
    \label{shif:regula}
   Let $(\ro, v)$ be a smooth enough solution to system (\textbf{INS}) on $[0,T^*)\times \T^2$  verifying estimates  \eqref{sob:ine:p} and \eqref{est_nabla2} with initial conditions $(\ro_0,v_0)$  satisfying
   \[  \ro_0\in\bigcap_{1\le p}L^p(\Omega),\;\; v_0\in H^1_0(\Omega),\;\; {\rm div\;}{v_0}=0 . \]
    For all $T\in  [0,T^*)$, $\eta\in [2,\infty]$ and $\varepsilon\in (0,1]$ , we have
   \begin{equation}\label{shift:regula:est} \|\nabla^2\sqrt{t}v\|_{L^\eta(0,T;L^{\lambda-\varepsilon})} + \|\nabla\sqrt{t}P\|_{L^\eta(0,T;L^{\lambda-\varepsilon})} \leq C_{0,T,\varepsilon} \quad \text{with} \quad \lambda = \frac{2\eta}{\eta-2}, \end{equation}
   where $C_{0,T}$ is a positive constant depending only on $\normede{\sqrt{\ro_0}v_0}$, $\normede{\n v_0}$ and $\|\ro_0\|_p$ for some $p$ depending on $\varepsilon$.

As  a direct consequence, for all $1\leq s<2$
it holds
\begin{align}
    \label{bound:nv:L1(Linfty)}
   \left( \int^T_0\normeinf{\n v(t)}^s dt\right)^{\frac{1}{s}}\le C_{0,T}.
\end{align}
\end{lem}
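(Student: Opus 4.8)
The strategy is to treat $\sqrt{t}v$ as the solution of the stationary Stokes system \eqref{stokes:system}, namely
\[
-\Delta(\sqrt{t}v)+\nabla(\sqrt{t}P)=-\sqrt{t}\ro v_t-\sqrt{t}\ro v\cdot\nabla v=:\sqrt{t}\,F,\qquad \dvg(\sqrt{t}v)=0,
\]
and to exploit $L^\eta_t L^{\lambda-\eps}_x$ maximal-regularity estimates for the Stokes operator on $\T^2$: for such a system one has
\[
\|\nabla^2(\sqrt{t}v)\|_{L^\eta(0,T;L^{\lambda-\eps})}+\|\nabla(\sqrt{t}P)\|_{L^\eta(0,T;L^{\lambda-\eps})}\le C\,\|\sqrt{t}F\|_{L^\eta(0,T;L^{\lambda-\eps})}.
\]
So everything reduces to controlling the right-hand side $\sqrt{t}\,\ro v_t$ and $\sqrt{t}\,\ro v\cdot\nabla v$ in $L^\eta(0,T;L^{\lambda-\eps})$, where $\lambda=\tfrac{2\eta}{\eta-2}$.

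First I would bound $\sqrt{t}\,\ro v_t$. Writing $\ro v_t=\sqrt{\ro}\cdot\sqrt{\ro}v_t$ and using $\|\ro\|_{L^\infty_t L^k}=\|\ro_0\|_k$ for every finite $k$ (conservation of Lebesgue norm), Hölder in space gives $\|\sqrt{t}\ro v_t\|_{L^{\lambda-\eps}}\le\|\sqrt{\ro}\|_{2k}\|\sqrt{t\ro}v_t\|_{m}$ with $\tfrac1{\lambda-\eps}=\tfrac1{2k}+\tfrac1m$; since $k$ may be taken as large as we like (at the cost of the $\eps$ loss, this is exactly where the constant depends on $\|\ro_0\|_p$ for some $p=p(\eps)$), $m$ can be chosen slightly below $\lambda$. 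Then the weighted Poincaré inequality \eqref{Poincare:l:h} together with \eqref{ine:time:deriv}–\eqref{ine:time:deriv:-ro} controls $\|\sqrt{t}v_t\|_{L^q}$ and hence $\|\sqrt{t\ro}v_t\|_m$ in $L^2(0,T)$ after another Hölder step; interpolating between the $L^\infty_tL^2$ bound $\sqrt{t}\,\sqrt\ro\,v_t$ from \eqref{ine:time:deriv} and the $L^2_t$ bound on $\sqrt{t}\nabla v_t$ one recovers the needed $L^\eta_t$ integrability for each $\eta\in[2,\infty]$. For the convective term $\sqrt{t}\,\ro v\cdot\nabla v$, I would factor $\ro v\cdot\nabla v=\sqrt\ro\cdot\sqrt\ro\,v\cdot\nabla v$, use Hölder to split off $\|\sqrt\ro\|_{2k}$, then use $\|\sqrt\ro\,v\|_{L^\infty_t L^j}\le C_0$ for all finite $j$ (Remark~\ref{rmq:conse:Poi:Sobo:reg}, estimate \eqref{normq:al:v}) and $\|\nabla v\|_{L^\infty_tL^2}\le c_0$ together with \eqref{sobolev:ine:criti} and \eqref{est_nabla2} ($\nabla^2 v\in L^2_tL^r$) to put $\nabla v$ in $L^2_tL^{2p}$; the weight $\sqrt t\le\sqrt T$ is harmless. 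Balancing exponents so the product lands in $L^\eta_tL^{\lambda-\eps}$ is then an elementary Hölder bookkeeping, using that $p$ (hence $k$) is free.

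Once \eqref{shift:regula:est} is established, the consequence \eqref{bound:nv:L1(Linfty)} follows by a two-dimensional Sobolev embedding: choosing $\eta$ large (so $\lambda-\eps>2$) we get $W^{2,\lambda-\eps}(\T^2)\hookrightarrow W^{1,\infty}(\T^2)$, whence $\sqrt{t}\,\nabla v\in L^\eta(0,T;L^\infty)$ with $\eta$ as large as we please; in particular $\sqrt{t}\,\nabla v\in L^{2+\delta}(0,T;L^\infty)$ for small $\delta>0$. Then, as displayed right before the statement,
\[
\Big(\int_0^T\|\nabla v(t)\|_\infty^s\,dt\Big)^{1/s}
=\Big(\int_0^T\big(t^{-1/2}\,\|\sqrt{t}\,\nabla v(t)\|_\infty\big)^s\,dt\Big)^{1/s}
\le C\,T^{\beta}\,\|\sqrt{t}\,\nabla v\|_{L^{2+\delta}(0,T;L^\infty)}
\]
by Hölder in time with conjugate exponents $\tfrac{2+\delta}{s}$ and its dual, which is finite precisely when $s<2$ and $\delta$ is chosen so that $s<\tfrac{2(2+\delta)}{2+\delta}$... more simply: the singular weight $t^{-s/2}$ is integrable near $0$ iff $s<2$, so the integral converges for every $s\in[1,2)$, giving \eqref{bound:nv:L1(Linfty)} and in particular \eqref{n2:v:inLin:L2}.

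The main obstacle I anticipate is the bookkeeping in the right-hand side estimate: one must simultaneously (i) keep the spatial Lebesgue exponent strictly below the critical $\lambda=\tfrac{2\eta}{\eta-2}$ (the unavoidable $\eps$-loss coming from the unbounded density, which forces $\|\ro_0\|_p$ to enter with $p\to\infty$ as $\eps\to0$), and (ii) obtain the time exponent $\eta$ for \emph{every} $\eta\in[2,\infty]$ from the single pair of weighted bounds \eqref{ine:time:deriv}–\eqref{ine:time:deriv:-ro}, which requires an interpolation between $L^\infty_tL^2$ and $L^2_t\dot H^1$ for $\sqrt{t}v_t$ and a careful tracking of which norms live in $L^1_t$, which in $L^\infty_t$, via Remark~\ref{rmq:conse:Poi:Sobo:reg}. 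The use of the Stokes maximal regularity in $L^\eta_tL^{\lambda-\eps}_x$ on the torus (as in \cite{DanMUch13}) and the weighted Poincaré inequality \eqref{Poincare:l:h} are the two black boxes that make the argument go through cleanly.
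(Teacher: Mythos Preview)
Your proposal is correct and follows essentially the same route as the paper: rewrite $(\sqrt{t}v,\sqrt{t}P)$ as the solution of the stationary Stokes system, bound the forcing terms $\sqrt{t}\ro v_t$ and $\sqrt{t}\ro v\cdot\nabla v$ at the two endpoints $L^\infty_tL^{r}$ (for $r<2$, via $\sqrt{t\ro}v_t\in L^\infty_tL^2$ and H\"older against $\sqrt{\ro}\in L^{2p}$) and $L^2_tL^q$ (for all $q<\infty$, via \eqref{ine:time:deriv:-ro} and Remark~\ref{rmq:conse:Poi:Sobo:reg}), then interpolate to reach $L^\eta_tL^{\lambda-\eps}$; the consequence \eqref{bound:nv:L1(Linfty)} is obtained exactly as you describe, by the Sobolev embedding $W^{1,\lambda-\eps}\hookrightarrow L^\infty$ for $\lambda-\eps>2$ followed by H\"older in time against the integrable weight $t^{-s/2}$. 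The only cosmetic difference is that the paper interpolates the composite terms directly rather than first isolating $\sqrt{t\ro}v_t$, but the substance is identical.
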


\begin{rmq}

    For $\ro_0\in\bigcap_{1\le p<\infty }L^p(\Omega)$, we obtain the same regularity as in \cite[Lemma 3.4]{DanMu} for $\ro_0\in L^\infty(\Omega)$. Assuming instead that $\ro_0\in L^p(\Omega)$ for some $p>3$ fixed, we have a different constraint, depending on $p$:
    \begin{align}
       \label{shift:regula:esti}
       \lVert \n^2\sqrt{t}v \rVert_{L^\eta(0,T;L^{\lambda-\varepsilon})}+ \lVert \n \sqrt{t} P\rVert_{L^\eta(0,T;L^{\lambda-\varepsilon})} \le C_{0,T}\esp{with} \lambda =\frac{2\eta p}{2+\eta p - 2p +\eta},
   \end{align}
   and respectively
   \begin{align}
    \label{bound:nv:L1(Linfty):p}
   \left( \int^T_0\normeinf{\n v(t)}^s dt\right)^{\frac{1}{s}}\le C_{0,T} \quad \text{for all} \quad 1\le s< \frac{2(p-1)^2}{(p-1)^2+p}. 
\end{align}
\end{rmq}

\begin{proof}[Proof of Lemma \ref{shif:regula}]
    The proof will based on the Proposition \ref{propo:TIme:derivative} and the following Stokes system:
    \begin{align}
        \label{Ss:system:}
        \begin{cases}
            &  -\Delta\sqrt{t} v+\n \sqrt{t}P=-\ro\sqrt{t} v_t-\sqrt{t}\ro v\cdot\n v \esp{in}(0,T)\times \T^2,\\
              & \dvg{\sqrt{t}v}=0  \esp{in}(0,T)\times \T^2.
        \end{cases}
    \end{align}

    From Proposition \ref{propo:TIme:derivative}, we know that $\sqrt{t\ro}v_t\in L^\infty(0,T;L^2)$. Since $\ro\in L^\infty(0,T;L^p)$ for any $p<\infty$, we deduce that $\ro\sqrt{t}v_t\in L^\infty(0,T;L^r)$ for all $r<2$. On the other hand, from the estimate on $\nabla\sqrt{t}v_t$ it follows that $\sqrt{t}v_t$ is bounded in $L^2(0,T;L^q)$ for any $q<\infty$ and thus $\ro\sqrt{t}v_t\in L^2(0,T;L^q)$ as well. In conclusion, by interpolation we get
    \begin{equation}
        \|\ro\sqrt{t}v_t\|_{L^\eta(0,T;L^{\lambda-\varepsilon})} \leq C_{0,T,\varepsilon} \quad \text{for any $\eta\geq 2$ and arbitrarily small} \quad \varepsilon>0,
    \end{equation}
    where $\lambda$ is as in (\ref{shift:regula:est}) and $C_{0,T,\varepsilon}$ depends on $\|\sqrt{\ro_0}v_0\|_2$, $\|\nabla v_0\|_2$ and the norm of $\ro_0$ in suitable Lebesgue space, depending on $\varepsilon$.



For the term $\sqrt{t}\ro v\cdot\nabla v$ we proceed similarly. By Remark \ref{rmq:conse:Poi:Sobo:reg}, $\sqrt{t}\ro v$ is bounded in $L^\infty(0,T;L^q)$ for any $q<\infty$.  Moreover, by the estimates on $\nabla v$ and $\nabla^2v$ from estimates \eqref{sob:ine:p} and \eqref{est_nabla2}, we get that $\nabla v$ is bounded in $L^\infty(0,T;L^2)$ and $L^2(0,T;L^q)$ for any $q<\infty$. In conclusion, $\sqrt{t}\ro v\cdot\nabla v$ is bounded in $L^\infty(0,T;L^r)$ and $L^\infty(0,T;L^q)$ for any $r<2$ and $q<\infty$. By interpolation, we have again
\[ \|\sqrt{t}\ro v\cdot\nabla v\|_{L^\eta(0,T;L^{\lambda-\varepsilon})} \leq C_{0,T,\varepsilon}. \]
Since we have shown that the right hand side of (\ref{Ss:system:}) belongs to $L^\eta(0,T;L^{\lambda-\varepsilon})$, we obtain the desired regularity for $\Delta\sqrt{t}v$ and $\nabla\sqrt{t}P$, which ends the first part of the Lemma.


For the second part of the Lemma, we proceed the same way as in \cite{DanMu}. Fix any $1\leq s<2$ and let $\eta$ be such that $s\eta<2(\eta-s)$. Since $\lambda>2$ for any $\eta\geq 2$, from the Sobolev embedding and Poincar\'e inequality we get
\[ \|\nabla\sqrt{t}v(t,\cdot)\|_{L^\infty} \leq C\|\nabla\sqrt{t}v(t,\cdot)\|_{W^{1,\lambda}} \leq C\|\nabla^2\sqrt{t}v(t,\cdot)\|_{L^\lambda}. \]

Therefore
\[\begin{aligned} \int_0^T\|\nabla v\|_{L^\infty}^s\;\dd t =& \int_0^Tt^{-\frac{s}{2}}\|\nabla\sqrt{t}v\|_{L^\infty}^s\;\dd t \leq C\int_0^T t^{-\frac{s}{2}}\|\nabla^2\sqrt{t}v\|_{L^{\lambda-\varepsilon}}^s\;\dd t \\
\leq & C\left(\int_0^T t^{-\frac{\eta s}{2(\eta-s)}}\;\dd t\right)^{1-\frac{s}{\eta}}\left(\int_0^T \|\nabla^2\sqrt{t}v\|_{L^{\lambda-\varepsilon}}^\eta\right)^{\frac{s}{\eta}}.
\end{aligned}\]
Since $\frac{\eta s}{2(\eta-s)}<1$, the right hand side is integrable and we get the bound for $\|\nabla v\|_{L^s(0,T;L^\infty)}$.

In the case for fixed $p$, we proceed analogously. First, note that we have $\lambda>2$ whenever $\eta<2(p-1)$. Choosing $1\leq s<\frac{2(p-1)}{p}$, let us pick $\frac{2s}{2-s}< \eta< 2(p-1)$. Then again $\frac{\eta s}{2(\eta-s)}<1$ and we can repeat the reasoning from the previous case.

\end{proof}

\section{Existence and uniqueness of solutions}

This part is dedicated to the mathematical justification of our analysis. The existence and the issue of uniqueness is clarified here. From the result \cite{DanMu} in Theorem 2.1, we know that for $\rho_0 \in L^\infty$ and $v_0 \in H^1$, global-in-time solutions exist. This is our basic tool to construct the approximative sequence. We fix an arbitrary time $T > 0$ in order to control the compactness of the sets under consideration.

Given $k \in \mathbb{N}$ we define
\begin{equation}
    \rho^k_0 = \min\{ \rho_0, k\}, \qquad v_0^k = v_0.
\end{equation}
Applying Theorem 2.1 from \cite{DanMu}, we obtain the global-in-time solutions $(\rho^k, v^k)$ initiated by $(\rho_0^k, v_0^k)$. The densities are globally bounded by $k$, and the velocity satisfies all the bounds from Theorem \ref{thm:Lp:density}, thanks to the obvious fact that
\begin{equation}
\|\rho_0^k\|_p \leq \|\rho_0\|_p \mbox{ \ \ for all } p \leq \infty, \qquad \|\sqrt{\rho^k_0} v_0\|_2 \leq 
\|\sqrt{\rho_0} v_0\|_2.
\end{equation}

What is left, is to justify the passage to the limit as $k \to \infty$. Since all the nonlinear terms in the equations contain the density multiplied by a function of velocity,
strong convergence is required only for the latter. However, from Lemma \ref{lem:time:regular} and the estimates from Theorem \ref{thm:Lp:density}, we obtain in particular that
\begin{equation}
    \|v^k\|_{H^\alpha(0,T;L^2)} + \|v^k\|_{L^2(0,T;H^1)} \leq C(\mbox{initial data})
\end{equation}
for any $\alpha<1/2$, with the right-hand side independent of $k$. Choosing suitable $\alpha$, this leads to the convergence 
\begin{equation}
    v^k \to v \mbox{ strongly in } L^{3}(0,T;L^3)
\end{equation}
Thus, having at hand
\begin{equation}
    \|\rho^k\|_{L^\infty(0,T;L^p)} \leq \|\rho_0\|_p,
\end{equation}
up to a subsequnece we have
\begin{equation}
    \rho^k \rightharpoonup \rho \mbox{ weakly-}\ast \mbox{ \ in \ } L^\infty(0,T;L^p).
\end{equation}
Taking $p>3$, we conclude that
\begin{equation}
    {\rm div} (\rho^k v^k \otimes v^k) \rightharpoonup 
    {\rm div} (\rho v \otimes v) \mbox{ \ \ in \ } \mathcal{D}'(\T^2 \times [0,T)).
\end{equation}
Analogously, $\rho^k v^k \rightharpoonup 
\rho v$. Consequently, the limit $(\rho,v)$ is the weak solution to the system (\ref{INS})
\begin{equation}
\begin{array}{l}
    \displaystyle \int_0^T \int_{\T^2} (\rho \psi_t + \rho u \cdot \nabla \psi )dxdt = \int_{\T^2} \rho_0(x)\psi(0,x)
    dx, \\[8pt]
   \displaystyle \int_0^T\int_{\T^2} (\rho u \phi_t +
   \rho u \otimes u :\nabla \phi - \mu \nabla u \nabla \phi )dxdt= \int_{\T^2} \rho_0(x) u_0(x) \phi(0,x) dx
\end{array}    
\end{equation}
for all $\psi \in \mathcal{D}(\T^2 \times [0,T))$ and 
all $\phi \in \mathcal{D}(\T^2 \times [0,T);\R^2)$ such that div $\phi =0$.

We maintain the regularity of $\nabla^2 u$ through weak convergence and the estimates from Theorem \ref{thm:Lp:density}. Thus, we obtain the solution as defined by Theorem \ref{thm:Lp:density}.

The final question is uniqueness. However, all the necessary elements to transform the system into Lagrangian coordinates are present. In particular, $\nabla v \in L^1(0,T;L^\infty)$. Therefore, we can apply directly Theorem 2.1 from \cite{DanMu}. The entirety of Section 4 applies to our case. With this, our Theorem \ref{thm:Lp:density} is complete.

Similar arguments lead to the proof of existence and uniqueness of solutions in the context of Theorem \ref{thm:Lp:ro>0}.

\subsubsection*{Acknowledgements}
The first author (JPA) has received funding from the European Union's Horizon 2020 research and innovation programme under the Marie Sk\l{}odowska-Curie grant agreement no. 945332.\\
The second author (PBM)  has been partly supported by the National Science Centre grant no. 2022/45/B/ST1/03432 (OPUS). The work of the third author (MS) was supported by the National Science Centre grant no. 2022/45/N/ST1/03900 (Preludium).

\begin{appendices}

\section{Appendix A}
For the reader's convenience, we recall here a few results that are used repeatedly in the paper. The first one is the following weighted Poincaré inequality 
\begin{lem}
    \label{poincare:inequa}
    Let $\ro:\T^2 \rightarrow \R$ be a nonnegative and nonzero measurable function. Then we have for all $ b\in H^1(\T^2)$ and $1\le m<\infty$, there exists a constant $C_m>0$ such that
   \begin{align}
    \label{Poincare:l:h}
    \normes{b}{m} \le \frac{1}{M}\left| \intT \ro b\right| + C_m \log^{\frac{1}{2}}\left( e+  \frac{\normede{\ro}}{M} \right)\normede{\n b}.
\end{align}
with $M$ the average of $\ro$:
$$ M:=\intT \ro.$$
Furthermore, for any  $ 1\le p\le \infty$, $0\le \alpha< p$, $1\le q<\frac{p}{\alpha}$ and $\beta \ge \frac{1}{q}-\frac{\alpha}{p}$, there exists an absolute constant $ C_{\beta,p,q}$ so that
\begin{align}
\label{poincare:al:be:p}
    \begin{split}
          \normes{ \ro^\alpha |b|^\beta}{q}
    \le C_{\beta,p,q} \normes{\ro}{p}^{\alpha }\frac{1}{M^\beta}\left| \intT \ro b\right|^\beta +  C_{\beta,p,q} \normes{\ro}{p}^{\alpha } \log^{\frac{\beta}{2}}\left( e+  \frac{\normede{\ro}}{M} \right)\normede{\n b}^\beta.
    \end{split}
\end{align}
\end{lem}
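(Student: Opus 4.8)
The plan is to obtain the second estimate \eqref{poincare:al:be:p} from the first one \eqref{Poincare:l:h} by a H\"older splitting, and to prove \eqref{Poincare:l:h} itself by a Brezis--Gallouet type optimization. I would normalize $|\T^2|=1$ throughout; then Cauchy--Schwarz gives $M=\intT\ro\le\normede{\ro}$, so $\normede{\ro}/M\ge 1$ and every logarithm below is nonnegative.

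\emph{Step 1: proof of \eqref{Poincare:l:h}.} Write $c:=\tfrac1M\intT\ro b$ for the $\ro$-weighted mean of $b$ and $\bar b:=\intT b$ for the usual mean, and decompose $b=(b-\bar b)+(\bar b-c)+c$. Using $|\T^2|=1$, this gives
\[ \normes{b}{m}\le\normes{b-\bar b}{m}+|\bar b-c|+\frac1M\Big|\intT\ro b\Big|. \]
The first term is controlled by $C_m\normede{\n b}$ through the two-dimensional Poincar\'e--Sobolev embedding $H^1(\T^2)\hookrightarrow L^m$ (this is the only place where the $m$-dependence of the constant enters). The middle term is the crucial one: since $\bar b-c=\tfrac1M\intT\ro(\bar b-b)$, putting $g:=b-\bar b$, which has zero mean, and applying H\"older with a free parameter $q\ge2$,
\[ |\bar b-c|\le\frac1M\normes{\ro}{q'}\normes{g}{q},\qquad\tfrac1q+\tfrac1{q'}=1. \]
I would then use the interpolation $\normes{\ro}{q'}\le\normede{\ro}^{2/q}M^{1-2/q}$ (valid since $1\le q'\le 2$ and $\normes{\ro}{1}=M$) together with the sharp two-dimensional inequality $\normes{g}{q}\le C\sqrt q\,\normede{\n g}$ for mean-zero $g$ (which follows from the Trudinger/Moser inequality and Poincar\'e), to get
\[ |\bar b-c|\le C\sqrt q\,\Big(\tfrac{\normede{\ro}}{M}\Big)^{2/q}\normede{\n b}\qquad\text{for every }q\ge2. \]
The choice $q:=\max\{2,\,4\log(\normede{\ro}/M)\}$ turns the prefactor into $C\,\log^{1/2}(e+\normede{\ro}/M)$; since this logarithmic factor is $\ge1$, it also dominates the term $C_m\normede{\n b}$. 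Adding the three pieces yields \eqref{Poincare:l:h}.

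\emph{Step 2: proof of \eqref{poincare:al:be:p}.} Apply H\"older to $\intT\ro^{\alpha q}|b|^{\beta q}$ with exponents $\tfrac{p}{\alpha q}$ and $\tfrac{p}{p-\alpha q}$ (the first exceeds $1$ precisely because $q<p/\alpha$; when $p=\infty$ one simply pulls out $\normeinf{\ro}^{\alpha}$). This gives
\[ \normes{\ro^\alpha|b|^\beta}{q}\le\normep{\ro}^{\alpha}\,\normes{b}{m}^{\beta},\qquad m:=\frac{\beta q p}{p-\alpha q}. \]
One checks that $m<\infty$ because $\alpha q<p$, and $m\ge1$ precisely because $\beta\ge\tfrac1q-\tfrac\alpha p$; hence $m\in[1,\infty)$ and \eqref{Poincare:l:h} applies to $\normes{b}{m}$, with a constant $C_m$ depending only on $\beta,p,q$. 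Raising to the power $\beta$ and using $(x+y)^\beta\le C_\beta(x^\beta+y^\beta)$ to separate the two contributions produces \eqref{poincare:al:be:p}, with $C_{\beta,p,q}$ absorbing $C_\beta$ and $C_m^\beta$.

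The only genuinely non-routine step is the optimization in $q$ producing the sharp $\log^{1/2}$ factor; it hinges on having the correct $\sqrt q$ rate of blow-up of the constant in the two-dimensional Sobolev embedding (a rate $q$ would only give $\log$). Everything else is bookkeeping with H\"older and interpolation. It is worth stressing that the constant in \eqref{Poincare:l:h} must be allowed to depend on $m$ — it necessarily diverges as $m\to\infty$, since $H^1(\T^2)$ is not embedded in $L^\infty$ — whereas the logarithmic factor carries only an absolute constant.
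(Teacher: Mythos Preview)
Your proof is correct, and Step~2 (the H\"older splitting for \eqref{poincare:al:be:p}) is essentially identical to the paper's: both factor $\ro^{\alpha q}|b|^{\beta q}$ with exponents $\tfrac{p}{\alpha q}$ and its conjugate, land on $\normes{b}{m}^\beta$ with $m=\tfrac{\beta q p}{p-\alpha q}$ (the paper writes this as $\beta p^*$), and then feed in \eqref{Poincare:l:h}.

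Step~1 is where the two arguments diverge. The paper proves the bound on $|\bar b - c|$ (equivalently on $|\bar b|$) by a \emph{Fourier truncation}: it splits the zero-mean part $b-\bar b$ into low frequencies $b_n$ (modes $1\le|k|\le n$) and a tail $\widetilde b_n$, uses the elementary estimates $\normeinf{b_n}\le C\sqrt{\log n}\,\normede{\n b}$ and $\normede{\widetilde b_n}\le n^{-1}\normede{\n b}$ obtained directly by Cauchy--Schwarz on Fourier coefficients, and then optimizes in $n\approx\normede{\ro}/M$. Your route is a real-variable \emph{Brezis--Gallouet optimization}: you H\"older-split $\tfrac1M\int\ro(b-\bar b)$ with a free exponent $q$, interpolate $\normes{\ro}{q'}$ between $L^1$ and $L^2$, invoke the sharp two-dimensional Sobolev growth $\normes{g}{q}\le C\sqrt q\,\normede{\n g}$, and optimize in $q$. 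Both hinge on the same underlying scaling (your $\sqrt q$ and the paper's $\sqrt{\log n}$ encode the same borderline $H^1(\T^2)\not\hookrightarrow L^\infty$ failure), but the paper's version is fully self-contained on the torus, while yours imports the $\sqrt q$ constant as a black box from Trudinger--Moser. In return, your argument is Fourier-free and would transplant verbatim to a bounded domain.
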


\begin{proof}
Before going into the details of the proof, let us notice that
inequality  \eqref{Poincare:l:h} has been used in the work  \cite{DanMu} on the two-dimensional incompressible Navier-Stokes equations with vacuum and with bounded density. For the reader convenience, we provide its proof here. 

We decompose $ b $ as follows:
\begin{align*}
    b=\overline{b}+ b_n+\widetilde{b}_n,
\end{align*}
where $\overline{b}$ is the average of $b$ and for any integer $n$ and $ x\in \T^2$,
\begin{align*}
  b_n(x):=\sum_{ \substack{k\in \mathbb{Z}^2\backslash  \{(0,0) \} \\   1\le |k|\le n} }\widehat{b}_k e^{2i\pi k\cdot x}   \esp{and}  \widetilde{b}_n(x) :=  \sum_{ \substack{k\in \mathbb{Z}^2\backslash \{(0,0)  \} \\ |k|> n}}\widehat{b}_k  e^{2i\pi k\cdot x}.
\end{align*}
The proof of \eqref{Poincare:l:h} relies on the following inequalities: for all $ 2\le q<\infty$,
\begin{align}
    \label{est:lf:b}
    \normeinf{b_n}\le C\sqrt{\log n} \normede{\n b} \esp{and} \normes{\widetilde{v}_n}{q}\le \frac{1}{n^{\frac{2}{q}}}\normede{\n b}.
\end{align}
Indeed, by Cauchy-Schwarz inequality and Parseval's identity, it is easy to prove that
\begin{align*}
     \normeinf{b_n}&\le  \sum_{ \substack{k\in \mathbb{Z}^2\backslash  \{(0,0) \} \\   1\le |k|\le n} }  \frac{1}{|k|}|k||\widehat{b}_k |\\
     &\le \left(\sum_{ \substack{k\in \mathbb{Z}^2\backslash  \{(0,0) \} \\   1\le |k|\le n} }  \frac{1}{|k|^2} \right)^{\frac{1}{2}} \left(\sum_{ \substack{k\in \mathbb{Z}^2\backslash  \{(0,0) \} \\   1\le |k|\le n} }  |k|^2|\widehat{b}_k |^2 \right)^{\frac{1}{2}}   \\
     &\le   C\sqrt{\log n} \normede{\n b}.
\end{align*}
On the other hand, for $2\le q<\infty$, since $ \Dot{H}^{1-\frac{2}{q}} \hookrightarrow   L^q(\T^2)$, we have
\begin{align*}
    \normes{\widetilde{b}_n}{q}\le C \lVert \widetilde{b}_n \rVert_{ \Dot{H}^{1-\frac{2}{q}} } \le C \frac{1}{n^{\frac{2}{q}}}\normede{\n b}.
\end{align*}
This complete the proof of \eqref{est:lf:b}.

Now, from Poincaré inequality, we have the obvious inequality: for all $1\le m< \infty$ 
\begin{align}
    \label{norm:m:b=f(n:v)}
    \normes{b}{m}\le |\overline{b}|+ C_m \normede{\n b}.
\end{align}
Because average of both $ b_n $ and  $ \widetilde{b}_n$ is $0$, one may write that, 
\begin{align*}
    M\overline{b}&=\intT\ro b+ \intT (M-\ro)(b-\overline{b})dx\\
    &=\intT\ro b-\intT \ro b_n-\intT\ro\widetilde{b}_n dx. 
\end{align*}
Therefore, using Hölder and Poincaré inequality, and also \eqref{est:lf:b},
\begin{align*}
    M|\overline{b}|&\le \left| \intT\ro b\right| +M\normeinf{b_n}+  \normede{\ro}\normede{\Tilde{b}_n}\\
   & \le   \left| \intT\ro b\right| +CM\left( \sqrt{\log n} + \frac{1}{n} \frac{\normede{\ro}}{M}\right)\normede{\n b}. 
\end{align*}
Then, taking $n \approx \frac{\normede{\ro}}{M}$, we have
\begin{align}
    \label{avera:control:b}
    |\overline{b}|\le \frac{1}{M}\left| \intT\ro b\right|+ C\log^\frac{1}{2}\left( e + \frac{\normede{\ro}}{M}\right)\normede{\n b}
\end{align}
and putting \eqref{avera:control:b} together with \eqref{norm:m:b=f(n:v)} gives \eqref{Poincare:l:h}.
  
Finally, to prove \eqref{poincare:al:be:p} we take advantage on Hölder's inequality.
Let $ 1\le p\le \infty$, $0<\alpha< p$, $1\le q<\frac{p}{\alpha}$ and $\beta \ge \frac{1}{q}-\frac{\alpha}{p}$. There holds

\begin{align*}
    \normes{ \ro^\alpha |b|^\beta}{q}\le  \normes{\ro^{\alpha }}{\frac{p}{\alpha }}\normes{|b|^\beta}{ p^*}\le \normes{\ro}{p}^\alpha\normes{b}{ \beta p^*}^\beta
\end{align*}
  with $\frac{1}{p^*}:= \frac{1}{q}-\frac{\alpha }{p}$ so that from the definition of $\beta$, we have $\beta p^*\ge 1$. 
Combining this inequality and \eqref{Poincare:l:h} gives \eqref{poincare:al:be:p}.
\end{proof}
We also used the following version of Desjardins' estimate  in order to propagate the $L^2$ norm of $\n v$. (see also \cite[Lemma 2]{DanMu})
\begin{lem}
\label{ladyn:log:cor:lem}
Let $p>1$.
    There exists a constant $C$ so that for all $b\in H^1(\T^2)$ and function $\ro\ge 0$ satisfying  $\ro \in L^p(\T^2)$, we have
    \begin{equation}
        \label{ladyn:ine:correc}
        \begin{split}
        \left( \intT\ro b^4 \right)^{\frac{1}{2}} &\le  2\normede{\sqrt{\ro}b}\left|\intT\ro b\right|\\
        &+\frac{p}{p-1} C\normede{\sqrt{\ro}b} \normede{\n b} \log^{\frac{1}{2}}\left( e+\frac{\normede{\ro}^2}{M^2}+\normep{\ro}\frac{\normede{\n b}^2}{\normede{\sqrt{\ro}b}^2}\right)
        \end{split}
    \end{equation}
    with $M:=\displaystyle \intT\ro$.
\end{lem}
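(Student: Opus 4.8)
The plan is to prove \eqref{ladyn:ine:correc} by an explicit interpolation argument, following the philosophy of the weighted Poincaré inequality \eqref{Poincare:l:h} but carried out at the level of $b^2$ rather than $b$. Write $w:=b^2$. The left-hand side is $\normede{\sqrt{\ro}\,w}$, and the natural idea is to decompose $w$ into its average and a mean-zero part, then split the mean-zero part into a low-frequency block and a high-frequency block exactly as in the proof of Lemma \ref{poincare:inequa}. More precisely, I would set $\overline{w}=\fint_{\T^2}w$, write $w=\overline{w}+w_n+\widetilde{w}_n$ with $w_n$ the frequencies $1\le|k|\le n$ and $\widetilde{w}_n$ the frequencies $|k|>n$, and estimate $\normede{\sqrt{\ro}w}\le \normede{\sqrt{\ro}}\,\overline{w}+\normede{\sqrt{\ro}w_n}+\normede{\sqrt{\ro}\widetilde{w}_n}$. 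The term with the average is controlled using the mass relation $M\overline{w}=\intT\ro w+\intT(M-\ro)(w-\overline w)$, which already introduces the quantity $\left|\intT\ro b\right|$ after noting $\intT \ro w=\intT\ro b^2=\normede{\sqrt\ro b}^2$ — one has to be slightly careful here and rather use $M\overline{w}\le \intT \ro w + \normede{\ro}\big(\normede{w_n}+\normede{\widetilde w_n}\big)$, reserving the genuine $\left|\intT\ro b\right|$ for the contribution that will ultimately be multiplied by $\normede{\sqrt\ro b}$ after taking a square root. Getting the precise form of the first term $2\normede{\sqrt\ro b}\left|\intT\ro b\right|$ will require keeping $\intT\ro b$ (not $\intT\ro b^2$) isolated and dealing with the Poincaré-type remainder of $b$ separately.

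For the low-frequency block I would use $\normede{\sqrt{\ro}w_n}\le \normep{\ro}^{1/2}\normes{w_n}{2p/(p-1)}\le \normep{\ro}^{1/2}\normes{w_n}{\infty}^{1/p}\normede{w_n}^{1-1/p}$, and then bound $\normede{w_n}\le\normede{w}=\normes{b}{4}^2$ and $\normeinf{w_n}$ by a Bernstein-type inequality $\normeinf{w_n}\lesssim n\,\normede{w}\lesssim n\,\normes{b}{4}^2$ (or, since $\nabla w=2b\nabla b$, one can instead bound $\normeinf{w_n}\lesssim \sqrt{\log n}\,\normede{\nabla w}\lesssim\sqrt{\log n}\,\normede{b\nabla b}\le\sqrt{\log n}\,\normes{b}{4}\normede{\nabla b}\cdot\ldots$ — the exact route chosen here is what produces the final logarithm). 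For the high-frequency block, $\normede{\sqrt\ro \widetilde w_n}\le\normep{\ro}^{1/2}\normes{\widetilde w_n}{2p/(p-1)}$ and the embedding $\dot H^{1-(p-1)/p}\hookrightarrow L^{2p/(p-1)}$ together with $|k|>n$ gives a gain of a negative power of $n$ times $\normede{\nabla w}\le 2\normes{b}{4}\normede{\nabla b}$. Collecting everything, $\normede{\sqrt\ro w}$ is bounded (modulo the average term) by $\normes{b}{4}$ times a quantity involving $\log n$ and $n^{-c/p}\normep{\ro}^{1/2}\normede{\nabla b}$; absorbing the $\normes{b}{4}=\big(\intT\ro b^4/\ldots\big)$-type factor — i.e. dividing through by $\normede{\sqrt\ro w}^{1/2}$ after noting $\normes{b}{4}^2$ and $\normede{\sqrt\ro b^2}$ are comparable only through $\ro$, which is the delicate point — and optimizing over $n$ by choosing $n$ comparable to $\big(\normep{\ro}\normede{\nabla b}^2/\normede{\sqrt\ro b}^2+\normede{\ro}^2/M^2\big)$ yields the $\log^{1/2}$ with exactly the argument appearing in \eqref{ladyn:ine:correc}, and the prefactor $\tfrac{p}{p-1}$ emerges from the exponent $1-\tfrac1p$ and the conjugate $\tfrac{p}{p-1}$ used in Hölder.

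The main obstacle I anticipate is the bookkeeping needed to convert a bound on $\normede{\sqrt\ro\, b^2}$ in terms of $\normes{b}{4}^{2\alpha}$-type factors into a clean bound without a leftover unweighted $L^4$ norm of $b$: since $\ro$ may vanish, $\normes{b}{4}$ is \emph{not} controlled by $\normede{\sqrt\ro b^2}$, so one cannot simply divide it out. The resolution, as in Desjardins' original argument \cite{Der97}, is to track the unweighted $L^4$ norm through the Poincaré inequality $\normes{b}{4}\lesssim \normede{\nabla b}+|\overline b|$ (two-dimensional Gagliardo–Nirenberg), feed it back, and absorb; choosing $n$ as above then closes the estimate. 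A secondary technical point is ensuring all frequency-localization inequalities are stated for $b^2$ with the right dependence on $\normede{\nabla b}$ via $\nabla(b^2)=2b\nabla b$ and Cauchy–Schwarz, so that only $\normede{\sqrt\ro b}$, $\normede{\nabla b}$, $\normep{\ro}$ and $M$ appear in the end. Once these are in place the optimization over $n$ is routine and reproduces \eqref{ladyn:ine:correc}.
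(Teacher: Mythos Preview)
Your plan to frequency-decompose $w=b^2$ has a concrete gap. The inequality you invoke, $\normede{\n w}\le 2\normes{b}{4}\normede{\n b}$, is false: since $\n w=2b\n b$, H\"older with exponents $(4,4)$ gives only $\normede{\n w}\le 2\normes{b}{4}\normes{\n b}{4}$, while with exponents $(4,2)$ one gets merely $\normes{\n w}{4/3}\le 2\normes{b}{4}\normede{\n b}$. In two dimensions $b\in H^1$ does not force $b^2\in H^1$ (recall $H^1(\T^2)\not\hookrightarrow L^\infty$), so the high-frequency bound $\normes{\widetilde w_n}{2p/(p-1)}\lesssim n^{-c}\normede{\n w}$ on which your optimisation over $n$ relies is simply unavailable. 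A salvage through $W^{1,4/3}$-regularity of $w$ would require redoing all the embeddings and the choice of $n$, and the ``feed-back'' via $\normes{b}{4}\lesssim\normede{\n b}+|\overline b|$ then reintroduces an unweighted $\normede{\n b}^2$ with no $\normede{\sqrt\ro b}$ attached, spoiling the structure of \eqref{ladyn:ine:correc}.

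The paper bypasses all of this by decomposing only \emph{one} factor of $b$ and keeping the weight on the other. Writing $\sqrt{\ro}\,b^2=(\sqrt{\ro}\,b)\cdot b$ and splitting the right factor as $b=\overline b+b_n+\widetilde b_n$, one has
\[
\Bigl(\intT\ro b^4\Bigr)^{1/2}
\le\bigl(|\overline b|+\normeinf{b_n}\bigr)\normede{\sqrt{\ro}\,b}
+\normep{\ro}^{1/4}\normes{\widetilde b_n}{q_4}\,\normes{\ro^{1/4}b}{4},
\qquad \tfrac1{q_4}=\tfrac14-\tfrac1{4p}.
\]
Since $\normes{\ro^{1/4}b}{4}^2=\bigl(\intT\ro b^4\bigr)^{1/2}$ is precisely the left-hand side, Young's inequality absorbs it, leaving $2\bigl(|\overline b|+\normeinf{b_n}\bigr)\normede{\sqrt{\ro}\,b}+\normep{\ro}^{1/2}\normes{\widetilde b_n}{q_4}^2$. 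Now the frequency estimates \eqref{est:lf:b} apply directly to $b\in H^1$, and choosing $n\approx\bigl(\normep{\ro}^{1/2}\normede{\n b}/\normede{\sqrt\ro b}\bigr)^{q_4/4}$ balances the two pieces, producing the logarithm with the stated argument and the prefactor $q_4/4=p/(p-1)$. The term $|\overline b|$ is then handled by \eqref{avera:control:b}, which supplies both $\bigl|\intT\ro b\bigr|$ and the contribution $\normede{\ro}^2/M^2$ inside the log. The whole device is that one copy of $b$ stays glued to $\sqrt\ro$ throughout, so neither an unweighted $L^4$ norm of $b$ nor $H^1$ regularity of $b^2$ is ever needed---exactly the obstacle you identified but could not resolve within your decomposition.
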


\begin{proof}
 Fix some $n\in \mathbb{N}$. Then, keeping the same notation as in the above Lemma and using Hölder inequality, one has
  \begin{align*}
      \left( \intT \ro b^4 dx \right)^\frac{1}{2} &=\normede{ \sqrt{\ro}(\overline{b}+b_n+\widetilde{b}_n)b}\\
      &\le  \left(| \overline{b}|+\normeinf{ b_n }\right) \normede{ \sqrt{\ro}b} + \normep{\ro}^\frac{1}{4} \normes{\widetilde{b}_n}{q_4}\normes{\ro^\frac{1}{4} b}{4},
  \end{align*}
  where $\frac{1}{q_4}:= \frac{1}{4}-\frac{1}{4p} $.\\
  We  thus have, using Young inequality,
\begin{align*}
    \left( \intT \ro b^4 dx \right)^\frac{1}{2}\le 2   \left(| \overline{b}|+\normeinf{ b_n }\right) \normede{ \sqrt{\ro}b} + \normep{\ro}^\frac{1}{2} \normes{\widetilde{b}_n}{q_4}^2.
\end{align*}
Hence, taking advantage of estimates \eqref{est:lf:b}, we get
  \begin{align*}
      \left( \intT \ro b^4 dx \right)^\frac{1}{2} \le 2 | \overline{b}|\normede{ \sqrt{\ro}b} + C\normede{\sqrt{\ro}b}\left( \sqrt{\log n } + n^{-\frac{4}{q_4}} \normep{\ro}^\frac{1}{2} \frac{\normede{\n b}}{\normede{\sqrt{\ro}b}}\right) \normede{\n b}.
  \end{align*}
Taking for $n$ the closest positive integer to 
$$ \left( \normep{\ro}^\frac{1}{2} \frac{\normede{\n b}}{\normede{\sqrt{\ro}b}} \right) ^\frac{q_4}{4},$$ we end up with

\begin{align}
      \label{Desjar:ine:1}
      \left( \intT \ro b^4 dx \right)^\frac{1}{2} \le 2 | \overline{b}|\normede{ \sqrt{\ro}b} + C\frac{q_4}{4}\normede{\sqrt{\ro}b}\log^\frac{1}{2}   \left( e + \normep{\ro} \frac{\normede{\n b}^2}{\normede{\sqrt{\ro}b}^2} \right) 
      \normede{\n b}.
  \end{align}
Then, taking advantage on the control \eqref{avera:control:b} of $|b|$ and the definition of $q_4$ leads to \eqref{ladyn:ine:correc}. 

\end{proof}

\section{Properties of the space $\mathcal{L}$ and $\mathcal{Y}_0$.}\label{app_L}

Let us now discuss the classes $\mathcal{L}$ and $\mathcal{Y}_0$. In particular we are going to prove the embedding \eqref{emb:Y:L:BMO}. Let us first recall the definition of the space $L^{\exp}(\Omega)$:
\begin{align*}
   & L^{\exp}(\Omega) = \left\{f\colon\Omega\to\R: \exists_{\beta>0} \text{ such that } \int_\Omega e^{\frac{|f(x)|}{\beta}}-1 \;\dd x <\infty\right\}\cdotp
\end{align*}
Obviously we have $L^\infty(\Omega)\subset \mathcal{L}\subset \mathcal{Y}_0$. 
On the other hand, using the properties of the exponential function, we will show that $\mathcal{Y}_0\subset L^{\exp}(\Omega)$.

Fix any $\beta>0$. We have
\[\begin{aligned} \int_\Omega e^{\frac{|f(x)|}{\beta}}-1\;\dd x =& \int_\Omega\sum_{k=1}^\infty \frac{1}{k!}\frac{|f(x)|^k}{\beta^k}\;\dd x = \sum_{k=1}^\infty\frac{1}{k!}\frac{1}{\beta^k}\|f\|_k^k \\
\leq & \sum_{k=1}^\infty \frac{1}{k!}\left(\frac{k}{\beta} \frac{\|f\|_k}{k} \right)^k.
\end{aligned}\]
From Stirling's formula, for large $k$ we have
\[ \frac{1}{k!}\lesssim \frac{1}{\sqrt{2\pi k}}\left(\frac{e}{k}\right)^k. \]
Therefore 
\[ \frac{1}{k!} \biggl(\frac{k}{\beta}\frac{\|f\|_k}{k} \biggr)^k \lesssim \biggl(\frac{e }{\beta}\frac{\|f\|_k}{k} \biggr)^k . \]
Since $\lim_{k\to\infty}\frac{\|f\|_k}{k}=0$, for sufficiently large $k$ it holds $\frac{e }{\beta}\frac{\|f\|_k}{k}<\frac{1}{2}$. Therefore the series $\displaystyle\sum_{k=1}^\infty \frac{1}{k!}\frac{1}{\beta^k}\|f\|_k^k$ is convergent.

Now, let us show that the inclusions $L^\infty\subset\mathcal{L}\subset\mathcal{Y}_0$ are sharp. The examples below consider the situation where $\Omega$ is a $d$-dimensional ball, but they can be adjusted to any bounded domain (or the torus) by a suitable rescaling and extending the function by zero.

Below, we will use mulitple times the Stirling's formula for the Gamma function:
\begin{equation}\label{stirling} \Gamma(p+1) = \sqrt{2\pi p}\left(\frac{p}{e}\right)^p\left(1+O\left(\frac{1}{p}\right)\right)
\end{equation}

\begin{example}
The function $-\log|x|$ for $|x|\leq 1$ belongs to $L^{\exp}$ but not to $\mathcal{Y}_0$.
\end{example}
\begin{proof}
 The first statement is straightforward and comes from the fact that for any $\beta>\frac{1}{d}$ we have
 \[ \int_{B(0,1)} e^{\frac{-\log|x|}{\beta}}\;\dd x = \int_{B(0,1)}|x|^{\frac{-1}{\beta}}\;\dd x = c_d\int_0^1 r^{d-1-\frac{1}{\beta}}\;\dd r <\infty. \]
 On the other hand, by the explicit calculations
 \[\begin{aligned} \|\log|x|\mathbbm{1}_{B(0,1)} \|_p^p =& \int_{B(0,1)}(-\log|x|)^p\;\dd x = c_d\int_0^1 r^{d-1}(-\log r)^p\;\dd r = c_d\int_0^\infty e^{-ds}s^p\;\dd s \\
 =& \frac{c_d}{d^{p+1}}\int_0^\infty e^{-z}z^p\;\dd z = \frac{c_d}{d^{p+1}}\Gamma(p+1). \end{aligned}\]
 Using (\ref{stirling}), we know that
 \begin{equation}\label{log} \|\log|x|\mathbbm{1}_{B(0,1)}\|_p^p \sim \sqrt{p}\left(\frac{p}{de}\right)^p \end{equation}
 and thus $\|\log|x|\|_p\sim p$. Therefore $\log|x|\notin\mathcal{Y}_0$.
\end{proof}

 

\begin{example}
The unbounded function $\log\log|\log|x||$ for $|x|<e^{-e}$ belongs to $\mathcal{L}$.
\end{example}
\begin{proof}
 \textbf{Step 1.} First, we will show that
 \[ \|\log|\log|x||\mathbbm{1}_{B(0,e^{-e})}\|_p^p \lesssim \left(\frac{\log p}{e}\right)^p p\cdot\left(\frac{f(p)}{de}\right)^{f(p)}. \]
  Note that for any $a>0$,
 \[ a^p \leq \int_a^\infty s^pe^{a-s}\;\dd s \leq e^a\Gamma(p+1). \]
 Therefore, for $f(p)=\frac{p}{\log p}$, we have
 \[\begin{aligned} \int_{B(0,e^{-e})}(\log|\log|x||)^p\;\dd x =& \left(\frac{1}{f(p)}\right)^p\int_{B(0,e^{-e})}\big[f(p)\log|\log|x||)\big]^p\;\dd x \\
 =& \left(\frac{1}{f(p)}\right)^p\int_{B(0,e^{-e})}\left[\log\left(|\log|x||^{f(p)}\right)\right]^p\;\dd x \\
 \leq & \left(\frac{1}{f(p)}\right)^p\Gamma(p+1)\int_{B(0,e^{-e})} |\log|x||^{f(p)}\;\dd x. 
 \end{aligned}\]
 Using the formula (\ref{log}), if follows that 
 \[ \int_{B(0,e^{-e})} (\log|\log|x||)^p\;\dd x \lesssim \left(\frac{1}{f(p)}\right)^p\Gamma(p+1)\sqrt{f(p)}\left(\frac{f(p)}{de}\right)^{f(p)}. \]
 Using again the Stirling's formula (\ref{stirling}) and the definition of $f$ (in particular $f(p)<p$), we get
 \[ \biggl\|\log|\log|x||\mathbbm{1}_{B(0,e^{-e})}\biggr\|_p^p \lesssim \left(\frac{\log p}{e}\right)^p p\cdot\left(\frac{f(p)}{de}\right)^{f(p)}. \]
 
 \textbf{Step 2.} Now, we use the above information to estimate the $L^p$ norm of $\log\log|\log|x||$. Let $g(p)=\frac{p}{(\log p)^{1/2-\varepsilon}}$ for some $0<\varepsilon<1/2$. Similarly as before, we have
 \[\begin{aligned} \int_{B(0,e^{-e})}(\log\log|\log|x||)^p\;\dd x =& \left(\frac{1}{g(p)}\right)^p\int_{B(0,e^{-e})}\left[\log\left((\log|\log|x||)^{g(p)}\right)\right]^p\;\dd x \\
 \leq & \left(\frac{1}{g(p)}\right)^p\Gamma(p+1)\int_{B(0,e^{-e})}(\log|\log|x||)^{g(p)}\;\dd x.
 \end{aligned}\]
 From Step 1, we know that
 \[ \int_{B(0,e^{-e})}(\log|\log|x||)^{g(p)}\;\dd x \lesssim \left(\frac{\log g(p)}{e}\right)^{g(p)} g(p)\cdot\left(\frac{f(g(p))}{de}\right)^{f(g(p))}. \]
 Therefore, using again (\ref{stirling}) and the fact that $g(p)<p$, we get
 \[ \|\log\log|\log|x||\|_p \lesssim \frac{p}{g(p)}\left(\frac{\log p}{e}\right)^{\frac{g(p)}{p}} (p\sqrt{p})^{1/p}\cdot\left(\frac{f(p)}{de}\right)^{\frac{f(p)}{p}}. \]
 In conclusion, since $\frac{p}{g(p)}=(\log p)^{1/2-\varepsilon}$ and
 \[ \lim_{p\to\infty}\left[\left(\frac{\log p}{e}\right)^{\frac{g(p)}{p}} (p\sqrt{p})^{1/p}\right] = 0, \quad \lim_{p\to\infty}\left(\frac{f(p)}{de}\right)^{\frac{f(p)}{p}} = e, \]
 we get
 \[ \frac{1}{\sqrt{\log p}}\|\log\log|\log|x||\|_p\log^{1/2}(e+\|\log\log|\log|x||\|_p) \to 0 \]
 as $p\to\infty$. This proves that $\log\log|\log p||\in \mathcal{L}$.
\end{proof}

\end{appendices}

\bibliography{bib}
\bibliographystyle{plain}

\end{document}